\newtheorem{theo}{Theorem}[section]
\newtheorem{lemma}[theo]{Lemma}
\theoremstyle{remark}
\newtheorem{remark}[theo]{Remark}
\newtheorem{question}[theo]{Question}
\newtheorem{desiderata}[theo]{Desiderata}
\newtheorem{listicle}[theo]{Properties}
\theoremstyle{definition}
\newcommand{\Frob}{\operatorname{Frob}}
\newcommand{\tr}{\operatorname{tr}}
\numberwithin{equation}{section}
\begin{document}

\title[Singularities over function fields]{Singularities and vanishing cycles in number theory over function fields}

\author{Will Sawin}

\maketitle

\begin{abstract} This article is an overview of the vanishing cycles method in number theory over function fields. We first explain how this works in detail in a toy example, and then give three examples which are relevant to current research. The focus will be a general explanation of which sorts of problems this method can be applied to.  \end{abstract}

\section{Introduction}

This article concerns number theory over function fields - the study of classical problems in number theory, transposed to the function field setting. These proposed problems are then amenable to attack by topological, and more precisely cohomological, methods. The goal of this article is to give a broad, conceptual introduction to a specific set of topological techniques that have proven useful to recent works in this area. We begin with a brief review of the general study of function field number theory, though we recommend that anyone unfamiliar with it read another article on the subject (e.g. \cite{Ellenberg-AWS}, \cite{CEF}) before tackling this one 

\subsection{Number theory over function fields} We start by expressing a problem in number theory as the problem of finding an approximate formula (or proving an already conjectured one) for the number of elements in a fixed set. Many problems in analytic number theory and arithmetic statistics have this form, or can be reduced to it. For instance, a solution to the twin primes conjecture on the infinitude of twin primes would follow from a counting formula for the number of twin primes at most $N$, for an arbitrary integer $N$.

If this problem seems intractable, we can consider a finite field analogue, which may be easier. We take the definition of the set we wish to count and modify it by replacing each occurrence of the integers in the definition with the ring $\mathbb F_q[T]$ of polynomials over a finite field, but otherwise change it as little as possible. For instance, we could replace twin primes (numbers $n$ such that $n$ and $n+2$ are prime) with polynomials $f$ such that $f$ and $f+2$ are both irreducible polynomials.  (More generally, if our problem involves the ring of integers of a number field, we can replace it with the ring of functions on an affine algebraic curve over $\mathbb F_q$.)

We can almost always recognize this new set as the set of points of an algebraic variety defined over $\mathbb F_q$, i.e. the solutions in $\mathbb F_q$ of some finite list of polynomial equations, possibly after removing the solutions to another list of polynomial equations. The Grothendieck-Lefschetz fixed-point formula then gives an approximate formula for the number of points in this new set if we can calculate the (high-degree, compactly-supported, \'{e}tale) cohomology of this algebraic variety.

If we don't know how to calculate this cohomology, we could transfer the problem to yet another setting. We can consider the analogous, purely topological problem, of calculating the (high-degree, compactly-supported, singular) cohomology of the space of solutions in $\mathbb C$ to the same list of polynomial equations. This can be attacked using any of the many known topological methods for computing cohomology groups. Once this is done, we can sometimes transfer back to the original finite field problem using comparison results for cohomology, or by using finite field analogues of our topological techniques. However, it is rarely possible to go all the way back and solve the original number theory problem by this approach.

\subsection{The topological methods} This article will primarily discuss a cluster of methods useful in the last step - the application of topological tools to calculate cohomology groups. This cluster centers around vanishing cycles, and depending on the problem may also involve perverse sheaves, the characteristic cycle, and the search for suitable compactifications. We will focus mainly on vanishing cycles, but mention the others, because they seem to be closely related. These methods arose mainly from the study of the topology of algebraic varieties over the complex numbers - for instance vanishing cycles arose in the work of Lefschetz, and perverse sheaves arose in the work of Goresky and Macpherson \cite{GoreskyMacpherson}. They were then later applied to the study of varieties over finite fields as well - vanishing cycles by the SGA authors \cite{sga7-i}, and perverse sheaves by Beilinson, Bernstein, Deligne, and Gabber \cite{BBDG}.

\subsection{Comparison with homological stabilization theory} Before discussing these tools, we will compare them to homological stabilization theory - a different set of tools that can be used to tackle similar problems. In particular, the brilliant papers \cite{EVW} and \cite{EWT} reduced two number-theoretic problems over function fields to calculating the cohomology of certain Hurwitz spaces, and used tools from stable cohomology to make great progress on understanding, though not to completely compute, these cohomology groups.

In view of the philosophy that the most helpful information a paper can contain is an explanation of when the other information in the paper will be useful, let us discuss the situations when these two methods can be applied. Homological stabilization theory seems to be most effective when the space $X$ we wish to study have the following properties:

\begin{desiderata} \label{intro-hst-desiderata}\phantom{.}

\begin{enumerate}

\item $X$ is a manifold. In particular, we can use Poincar\'{e} duality to express the (high-degree) compactly supported cohomology of $X$ in terms of the (low-degree) usual cohomology.

\item $X$ has a nice descriptions as a classifying space of groups. For instance, $X$ may be a configuration space, a moduli space of curves, or a cover of one of these.

\end{enumerate}
\end{desiderata}

For example, the Hurwitz spaces studied in \cite{EVW} and \cite{EWT} are certainly manifolds, and moreover are covering spaces of configuration spaces. Thus, they (or their connected components) can be described as the classifying spaces of certain explicit subgroups of the braid group. On the other hand, the spaces discussed in this paper share a contrasting set of properties:

\begin{desiderata}\label{intro-vc-desiderata} \phantom{.}

\begin{enumerate}

\item $X$ is not a manifold - instead, it has singularities.

\item $X$ has a nice description as the set of solutions of a system of polynomial equations (e.g. it is described by simple equations, or equations of low degree, or relatively few equations, or all of these at once.)

\item $X$ does not have any obvious nice descriptions in terms of configuration spaces, moduli spaces of curves, etc. 

\item $X$  lies in a natural family of spaces $X_t$, depending on some auxiliary parameters $t$. All the spaces in this family are defined by similar equations, but may have very different singularities. A generic member of the family often has no singularities at all.

\end{enumerate}

\end{desiderata}

Because homological stabilization was developed to study the usual cohomology of classifying spaces of groups, and is best adapted for this situation, conditions (1) and (3) are problematic if we wish to apply methods from that field.

More subtly, condition (4) is also problematic for methods from the field of homological stabilization or other purely topological approaches. If our spaces lie in a parametric family of different spaces, which are not all homeomorphic, then they are unlikely to have any nice purely topological description, even a non-obvious one. This is because topological constructions usually don't depend on continuous auxiliary parameters in a nontrivial way. Instead, topological objects are invariant under continuous deformations.

On the other hand, for the methods discussed in this paper, not being a classifying space is not a problem, singularities can be dealt with, and (2) and (4) are both very helpful. 

\subsection{Contents} This article will begin, in Section \ref{s-vc}, with a conceptual introduction to vanishing cycles theory that examines how it can be applied to a simple toy problem. (Some introductions with a greater level of technical detail include, in complex geometry, \cite{MasseyIntro}, and in the $\ell$-adic setting, \cite[\S3]{FreitagKiehl} and \cite[\S9.2]{Fu}.) We will then move on to discussing three specific problems from recent research. We will explain the number-theoretic motivation, the spaces $X$ whose topology must be studied, and how they are related. We will then discuss how the spaces $X$ realize the properties (1)-(4) and how vanishing cycles, and related methods, have been used to attack them.

In number-theoretic terms, the problems we discuss will be
\begin{itemize}

\item The moments of $L$-functions. (Section \ref{s-lf})

\item The values of character sums over intervals. (Section \ref{s-cs})

\item The distribution of CM points on $X(1) \times X(1)$. (Section \ref{s-st})

\end{itemize}
We do not assume familiarity with these problems in their classical, finite field, or topological incarnations.  

\subsection{Conclusion} A secondary purpose of this article is to be a survey of the great diversity of spaces whose cohomology is of interest in analytic number theory. In particular, many seem to arise from different fields of math. Some, such as the Hurwitz spaces, are perhaps most naturally defined topologically. Others, such as spaces of tuples of polynomials satisfying a congruence condition (discussed in \S\ref{s-lf}), clearly hail from the world of algebraic geometry. Spaces defined using the Jacobian variety and Abel-Jacobi map (see \S\ref{s-st}), might be said to appear most naturally in complex geometry. The complements of hyperplane arrangements (\S\ref{s-cs})  are quite natural from the perspective of all these fields. It seems reasonable to believe that the greatest progress in function field number theory in the future will come from combining vanishing cycles techniques, other topological techniques, and number-theoretic techniques to achieve something none of them could alone.

\subsection{Acknowledgments} 
 This article was written while the author served as a Clay Research Fellow. I would like to thank Johan de Jong, Jordan Ellenberg, Philippe Michel, and (especially) the anonymous referee for many helpful comments on earlier versions of this paper.

\section{Vanishing cycles}\label{s-vc}

Vanishing cycles originally arose in the context where we have a map of algebraic varieties over the complex numbers $f: X \to \mathbb A^1_{\mathbb C}$ (or in topological language $f: X \to \mathbb C$) and view the inverse images $X_t$ of $t \in \mathbb A^1_{\mathbb C}$ (or $t\in \mathbb C$) as a parametric family of varieties. When $f$ is smooth and proper, the fibers are compact manifolds, and moreover are all homeomorphic because we can flow one fiber into another using Ehresmann's lemma. Thus, they have the same cohomology. Vanishing cycles theory is designed to study how the cohomology differs when this does not hold, and in particular the fibers are not all smooth.

\subsection{A family of elliptic curves}\label{ss-elliptic} To see how this theory works, let us consider the family of elliptic curves $E_t$ in $\mathbb P^2(\mathbb C)$ with affine equation \begin{equation}\label{family-of-elliptic} y^2 =x^3 + x^2 + t .\end{equation}
\begin{remark} Taking the solutions to this equation in the projective plane $\mathbb P^2(\mathbb C)$ involves considering the homogenized equation $y^2z = x^3 + x^2 z + tz^3$. Here we have added powers of $z$ to each term so that they all have the maximal degree. We then consider the space of solutions $(x,y,z) \in \mathbb C^3$ of this equation, with $x,y,z$ not all $0$, up to scaling $(x,y,z) \mapsto (\lambda x, \lambda y, \lambda z)$. 

This has the effect of adding to the original solutions, which correspond to triples $(x,y,z)$ with $z \neq 0$, a new point $x=0, y=1, z=0,$ ``at infinity". For general algebraic varieties, we may need to add more than one point at infinity, when there are multiple solutions with $z=0$ of the defining equations.

\end{remark} 

\begin{remark} We have chosen the examples in this section so that nothing interesting happens with the points at infinity, and they can safely be ignored, but dealing with these points can be an important difficulty in research problems.\end{remark}

The derivatives of the polynomial $x^3 + x^2 + t-y^2$ in the $x$ and $y$ variables respectively are $3x^2+2x$ and $2y$. The only points in the plane where these both vanish are $(x,y)=(0,0)$ and $(-2/3,0)$. Thus, the only points where this polynomial and its derivatives all vanish are $(x,y,t)=(0,0,0)$ and $(-2/3,0, -8/27)$. Thus by the implicit function theorem, for $t \neq 0, -8/27$, the solutions of this polynomial form a smooth manifold of complex dimension one - a Riemann surface.  Because this equation has degree $3$, the surface $E_t$ is an elliptic curve, i.e. a Riemann surface of genus one. (We will verify this formally in \eqref{elliptic-cohomology-verification}.) Therefore we can calculate the singular cohomology groups
\begin{equation}\label{elliptic-cohomology} H^0(E_t, \mathbb Q)=\mathbb Q, H^1(E_t, \mathbb Q)= \mathbb Q^2, H^2(E_t, \mathbb Q)= \mathbb Q\end{equation}
for $t \neq 0,-8/27$ (i.e. for \emph{generic} values of $t$).

On the other hand, for $t=0$, the equation \eqref{family-of-elliptic} has a singularity at the point $(0,0)$, and is not smooth. To understand its topology, we can use its parameterization - solutions can be written $x=w^2-1, y= w(w^2-1)$ for $w \in \mathbb C$. In fact, projective solutions to \eqref{family-of-elliptic} can be given by the same formula for $w$ in the Riemann sphere.

\begin{remark} To find this parameterization, one can set $w=y/x$ and then solve for $x$ and $y$ in terms of $w$.\end{remark}

Because $w=y/x$, this parameterization is one-to-one, except for when $x$ and $y$ are both $0$, which occurs for $w=1$ and $w=-1$. It follows that the solution set to \eqref{family-of-elliptic} with $t=0$ is obtained from the Riemann sphere by gluing together the two points $w=1$ and $w=-1$. From this description it is easy to compute its cohomology as \begin{equation}\label{special-fiber-cohomology} H^0(E_0, \mathbb Q)=\mathbb Q, H^1(E_0, \mathbb Q)= \mathbb Q, H^2(E_0, \mathbb Q)= \mathbb Q\end{equation} (perhaps most easily by giving it a CW complex structure with one $0$-cell, one $1$-cell, and one $2$-cell.)

The vanishing cycles method explains the discrepancy between \eqref{elliptic-cohomology} and \eqref{special-fiber-cohomology} using local information at the singular point $(0,0)$.

Given any cycle in the homology of $E_t$, as $t$ goes to $0$ we can continually push the cycle into a new fiber, producing a cycle in $E_0$. If $E_0$ were smooth, this map would give an isomorphism on homology. However, where there is a singularity something else can happen - a nontrivial cycle in $E_t$ can collapse down to a single point in $E_0$. This happens precisely for one nontrivial circle around the torus. In fact, taking a circle on a torus and collapsing it down to a single point produces a space which is homeomorphic to a sphere with two points glued together. as in Figure \ref{torus-figure}. \begin{figure} \includegraphics[width=0.25\linewidth]{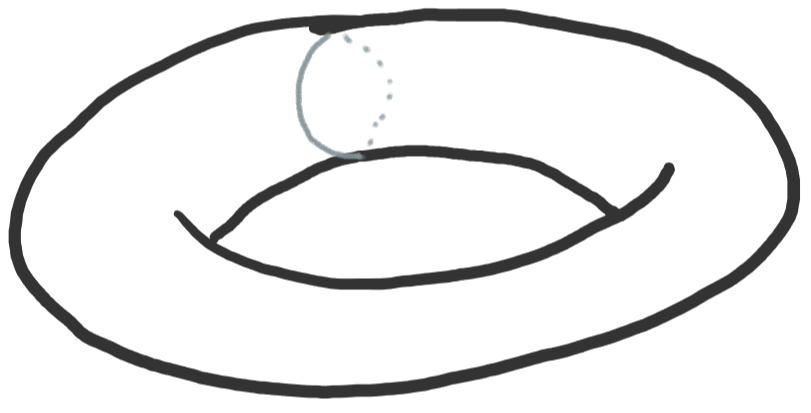} \includegraphics[width=0.25\linewidth]{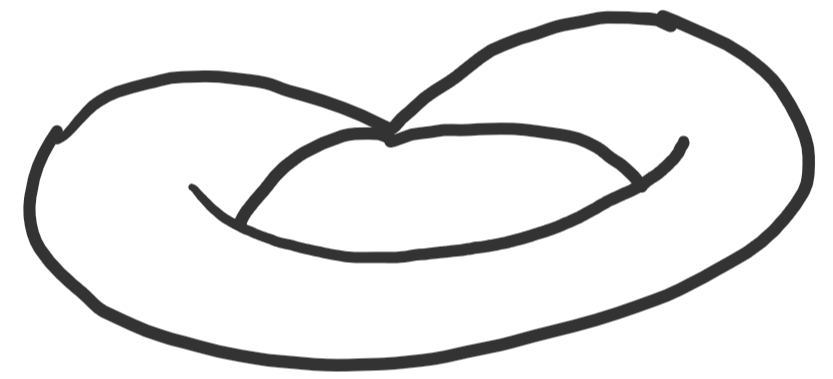} \caption{The circle on the left contracts to the singularity on the right.}\label{torus-figure}  \end{figure}

\subsection{Generalities on vanishing cycles} More generally, the classical vanishing cycle theory shows, when $X_t$ is a parametric family of complex spaces where $X_t$ is smooth for all but finitely many $t$ and $X_0$ has isolated singularities, that:

\begin{itemize}
\item  There is a natural map $H_* ( X_t) \to H_* (X_0)$, for generic $t$ near $0$.
\item This map fits into a long exact sequence, whose third term is a sum of contributions from singular points.
\item The contribution of each singular point depends only on local data in a neighborhood of this point - more precisely, it is the reduced homology of the Milnor fiber. 
\end{itemize}

In the arithmetic setting, it is most convenient to express this data using sheaf cohomology:

\begin{listicle} \label{properties-of-vanishing-cycles} \phantom{.}

\begin{itemize}

 \item There exists a map $H^* (X_0,\mathbb Q) \to H^* (X_\eta, \mathbb Q)$, for $\eta$ the generic value of $t$.
 
\item This map fits into a long exact sequence, whose third term is the cohomology $H^*(X_0, R\Phi \mathbb Q)$ on $X_0$ of a complex of vanishing cycles $R\Phi \mathbb Q$.

\item $R\Phi \mathbb Q$ is supported at the singularities of $X_0$.

\item $R\Phi \mathbb Q$ is defined locally, so its stalk at a point depends only on the geometry of $X$ and $f$ in a neighborhood of that point.

\end{itemize}

\end{listicle}

 If the singularities of $X_0$ are isolated, then $H^i(X_0, R\Phi \mathbb Q)$ is by definition the sum over the singular points of $X_0$ of the stalk of $R^i\Phi \mathbb Q$ at that point, and this recovers the cohomological dual of the classical picture. If there are higher-dimensional singularities, then cohomology of the complex $R\Phi \mathbb Q$ is a more complicated operation.

For now, let us focus on the case where singularities are isolated. In particular, we will show how, using only these facts, and not knowing anything else about the definition of vanishing cycles or elliptic curves, one can recover the cohomology of the family $E_t$ described by \eqref{family-of-elliptic}. This fits with how vanishing cycles arguments in the arithmetic setting usually proceed in practice - it's usually most convenient to use abstract properties as much as possible and definitions and constructions as little as possible.

\begin{remark} For precision, we will state the definition of vanishing cycles in a general context. Let $X$ be a variety with a map $f: X \to \mathbb A^1_{\mathbb C}$. Let $X_0$ be the fiber of $X$ over $0$ and let $i: X_0 \to X$ be the inclusion. Let $D \subset \mathbb A^1_{\mathbb C}$ be a small disc around $0$. Let $\tilde{D}$ be the universal cover of the punctured disc $D \setminus \{0\}$. Let $\tilde{X}  = X \times_{\mathbb A^1_{\mathbb C}} \tilde{D}$ and let $p: \tilde{X} \to X$ be the projection map.

There are pullback and derived pushforward functors, $p^*$ and $Rp_*$ respectively, associated to $p$, and an adjunction map $\mathbb Q \to R p_* p^* \mathbb Q$. Applying the pullback functor $i^*$ we obtain a map $i^* \mathbb Q \to i^* R p_* p^* \mathbb Q$, whose mapping cone is defined to be $R \Phi \mathbb Q$.

From this definition the claims of Properties \ref{properties-of-vanishing-cycles} can be deduced: From the definition of the mapping cone we have a long exact sequence \[H^* (X_0, i^*\mathbb Q) \to H^* (X_0 ,  i^* R p_* p^* \mathbb Q)\to H^*( X_0, R\Phi \mathbb Q).\] We have $i^* \mathbb Q= \mathbb Q$ so the first term is $H^*(X_0,\mathbb Q)$, and one can check that the second term is $H^* (X_\eta, \mathbb Q)$ using the proper base change theorem. Because of the smooth base change theorem, $R \Phi \mathbb Q$ vanishes away from singularities, and because both $R p_*$ and the adjunction can be defined locally, the vanishing cycles can be defined locally.

See \cite{MasseyIntro} or \cite[XIII - XVI]{sga7-ii}  for more on this.\end{remark}

\subsection{A family of conics}\label{ss-conics} To understand the cohomology of the family $E_t$ from \S\ref{ss-elliptic}, we will compare $E_t$ to a simpler family $X_t$, defined by the equation \begin{equation}\label{quadric-family} y^2= x^2 + t \end{equation} or equivalently \begin{equation} (y+x) (y-x) = t .\end{equation} Homogenizing this equation, we get $y^2 = x^2 + tz^2$, which when $z=0$ has two nonzero solutions, $(x,y,z)=(1,1,0)$ and $(1,-1,0)$, up to scaling. Thus the projectivization of this family has two additional points at infinity.

For $t\neq 0$, we can parameterize the solutions of this equation by setting $w=y+x$, so $y-x = \frac{t}{w}$ and thus \[x = \frac{w}{2} - \frac{t}{2w}, y = \frac{w}{2} + \frac{t}{2w}.\] Here $w=0$ and $w =\infty$ go to the two points at infinity. This map shows that the solution set is isomorphic to the Riemann sphere, and thus \begin{equation}\label{quadric-cohomology} H^0(X_t, \mathbb Q) = \mathbb Q, H^1(X_t, \mathbb Q)=0, H^2(X_t, \mathbb Q) = \mathbb Q \end{equation} for $t\neq 0$. On the other hand, if $t=0$, the solution set consists of two Riemann spheres, with $x+y=0$ and $x-y=0$ respectively, glued at the point $(x,y)=(0,0)$. The cohomology of two spheres glued at a point can be calculated as \begin{equation}\label{quadric-singularity} H^0(X_0, \mathbb Q) = \mathbb Q, H^1(X_0, \mathbb Q)=0, H^2(X_0, \mathbb Q) = \mathbb Q^2 \end{equation} by using a CW complex with one $0$-cell and two $2$-cells. 

We now consider the vanishing cycles long exact sequence associated to this family, for $\eta$ a generic value of $t$: 
\[ \begin{tikzcd} H^0(X_0, \mathbb Q)\arrow[r] & H^0(X_\eta,\mathbb Q) \arrow[r] & H^0(X_0 , R \Phi \mathbb Q) \arrow[dll] \\
 H^1(X_0, \mathbb Q)\arrow[r] & H^1(X_\eta,\mathbb Q) \arrow[r] & H^1(X_0 , R \Phi \mathbb Q) \arrow[dll] \\
  H^2(X_0, \mathbb Q)\arrow[r] & H^2(X_\eta,\mathbb Q) \arrow[r] & H^2(X_0 , R \Phi \mathbb Q) \end{tikzcd}\]
  which specializes to
  \[ \begin{tikzcd} \mathbb Q\arrow[r] &\mathbb Q \arrow[r] & H^0(X_0 , R \Phi \mathbb Q) \arrow[dll] \\
 0\arrow[r] &0 \arrow[r] & H^1(X_0 , R \Phi \mathbb Q) \arrow[dll] \\
\mathbb Q^2\arrow[r] & \mathbb Q \arrow[r] & H^2(X_0 , R \Phi \mathbb Q) \end{tikzcd}\]
To calculate this long exact sequence completely, it suffices to check that the maps $H^0(X_0 , \mathbb Q) \to H^0( X_{\eta}, \mathbb Q)$ and $H^2(X_0 , \mathbb Q) \to H^2( X_{\eta}, \mathbb Q)$ are surjective. In the classical setting, we would check that flowing the corresponding homology cycles - either a point or all of $X_t$ - from $X_t$ to $X_0$ produces a nonzero cycle, in this case either a point or all of $X_0$. In an arithmetic setting, it might be more straightforward to check from general vanishing results that $R\Phi \mathbb Q$ vanishes in degrees $-1$ and $2$, so  $H^{-1}(X_0 ,R\Phi \mathbb Q) = H^2(X_0 , \mathbb R \Phi \mathbb Q)=0$. 

From this long exact sequence, we now obtain \[ H^0( X_0, R\Phi\mathbb Q) = 0 , H^1(X_0, R \Phi \mathbb Q) =\mathbb Q, H^2(X_0 , \mathbb R \Phi \mathbb Q)=0.\] Recalling that in the case of isolated singularities, this cohomology is a fancy notation for a sum over the singular points, we conclude that the contribution of the singularity $(0,0)$ is $\mathbb Q$ in degree $0$. 

\subsection{Comparing the two families} Now comes the key point: A neighborhood of the singularity $x=0, y=0, t=0$ in the solution set of the equation \eqref{quadric-family} is isomorphic to a neighborhood of the singularity $x=0,y=0,t=0$ of the equation \eqref{family-of-elliptic}, under the map that fixes $y$ and $t$ and sends $x$ to $x \sqrt{1+x} = x + x^2/ 2 - x^3/ 4 + 3 x^4/8 - \dots $. Thus, the vanishing cycles contributions of that point to the two different families must be identical.  Furthermore, in $E_0$ there are no other singularities that contribute, so we must also have  \[ H^0( E_0, R\Phi\mathbb Q) = 0 , H^1(E_0, R \Phi \mathbb Q) =\mathbb Q, H^2(E_0 ,  R \Phi \mathbb Q)=0.\] 
Thus, the vanishing cycles long exact sequence
\[ \begin{tikzcd} H^0(E_0, \mathbb Q)\arrow[r] & H^0(E_\eta,\mathbb Q) \arrow[r] & H^0(E_0 , R \Phi \mathbb Q) \arrow[dll] \\
 H^1(E_0, \mathbb Q)\arrow[r] & H^1(E_\eta,\mathbb Q) \arrow[r] & H^1(E_0 , R \Phi \mathbb Q) \arrow[dll] \\
  H^2(E_0, \mathbb Q)\arrow[r] & H^2(E_\eta,\mathbb Q) \arrow[r] & H^2(E_0 , R \Phi \mathbb Q) \end{tikzcd}\]
using this equation and the calculation \eqref{special-fiber-cohomology} of the cohomology of the special fiber, reduces to
\[ \begin{tikzcd} \mathbb Q\arrow[r] & H^0(E_\eta,\mathbb Q) \arrow[r] & 0  \arrow[dll] \\
\mathbb Q\arrow[r] & H^1(E_\eta,\mathbb Q) \arrow[r] & \mathbb Q \arrow[dll] \\
  \mathbb Q\arrow[r] & H^2(E_\eta,\mathbb Q) \arrow[r] & 0  \end{tikzcd}\] 
Using the the fact that $E_\eta$ is a compact surface and so $H^2(E_\eta,\mathbb Q) \neq 0$, we can see that the map into $H^2(E_\eta,\mathbb Q)$ is nonzero, which determines all the other maps and in particular forces \begin{equation}\label{elliptic-cohomology-verification} H^0 (E_\eta, \mathbb Q) = \mathbb Q, H^1(E_\eta, \mathbb Q) = \mathbb Q^2, H^2(E_\eta, \mathbb Q) = \mathbb Q.\end{equation} Thus, we have verified \eqref{elliptic-cohomology}. By the classification of oriented surfaces, we can confirm that $E_\eta$ must be a surface of genus one.

Of course, there are easier ways to check that $E_t$ is topologically a torus. But there are other problems where the vanishing cycles strategy demonstrated here is essential. We can find a nice family of spaces, and try to deduce facts about a particular fiber from knowledge about the general fiber, or, as in this case, deduce facts about the generic fiber from information about a particular one. It suffices to find all singularities of this particular fiber and estimate their vanishing cycles contribution. To do this we can look at a small neighborhood of the singularity and try to understand its geometry. In particular, we could find another family of spaces, defined by a simpler equation, with a singularity of the same type - ideally as its only singularity.  Finally, we can reduce the vanishing cycles question to a global question about this simpler space. Hopefully, we can solve this problem directly. By reducing a global question on our original family to a local question at a singularity to a global question on a new family, we have progressively simplified the problem, eventually reaching a form that we know how to solve.

\subsection{Calculating the monodromy}

In addition to calculating the cohomology of the fibers $E_t$, we can gain a little more information about the family $E_t$ using the same analysis of the singularity we did already. Suppose we send $t$ in a small circle around $0$, and let $E_t$ vary. Because $E_t$ remains smooth for all these values, we can always flow $E_t$ into the next one. This flow defines a mapping class of homeomorphisms $E_t \to E_t$ and thus a map on cohomology $H^*(E_t, \mathbb Q) \to H^*(E_t , \mathbb Q)$. This map is known as the monodromy action. Because it comes from an (orientation-preserving) homeomorphism, it certainly fixes $H^0$ and $H^2$. How does it act on $H^1$?

It is a classical fact that, for this family and similar ones, the action is unipotent. Topologically, one can recognize this mapping class as a Dehn twist, which always acts by unipotent transvections on $H^1$. A more number-theoretic argument would recognize this loop as a loop around a cusp in the modular curve $X(1)$ parameterizing elliptic curves. The modular curve is a quotient of the upper half-plane by $SL_2(\mathbb Z)$, with each loop defining a conjugacy class in $SL_2(\mathbb Z)$, and the action on $H^1$ given by the standard representation of $SL_2(\mathbb Z)$. From examining the geometry of the upper half-plane and this action, one sees that a loop around a cusp defines a unipotent conjugacy class.

Let us see how we can verify this same fact with vanishing cycles. The key facts are that 

\begin{itemize}

\item We can define a monodromy action on $H^*(X_0, R\Phi \mathbb Q)$ such that all the maps of the vanishing cycles long exact sequence commute with monodromy. Here we take monodromy to act trivially on $H^*(X_0,  \mathbb Q)$.

\item This action on the contribution of a singularity depends only on the geometry of $X$ and $f$ in a neighborhood of that singularity. \end{itemize}

Examining the vanishing cycles long exact sequence for the family of conics $X_t$ defined in \S\ref{ss-conics}, we see that $H^1(X_0 , R \Phi \mathbb Q) $ is a subspace of $H^2(X_0, \mathbb Q)$. Hence, because it is a subspace of a space with trivial monodromy action, it has trivial monodromy action. Thus, in a short exact piece\[ 0\to H^1(E_0, \mathbb Q) \to H^1(E_\eta, \mathbb Q) \to H^1(E_0, R\Phi \mathbb Q) \to 0\] of the vanishing cycles long exact sequence for $E_t$, which reduces to \[ 0 \to \mathbb Q\to H^1(E_\eta, \mathbb Q) \to \mathbb Q\to 0,\] the monodromy action is trivial on both $\mathbb Q$s. Hence the action on $H^1(E_\eta, \mathbb Q)$ must be unipotent.

So we have recovered this unipotence phenomenon without needing any understanding of the surface $E_t$. One could go further, and check that this monodromy action is nontrivial unipotent, using Deligne's theory of weights, but this would take us too far afield.

\subsection{Conclusion}

The arguments we have just seen highlight some key properties of vanishing cycles theory, which should start to provide an explanation for Desiderata \ref{intro-vc-desiderata}:

\begin{enumerate}

\item The singularities of the special fibers $E_0$ and $X_0$ did not harm our argument. Instead, they helped - the singularity of $E_0$ gave it a rational parameterization, making its cohomology easier to compute, which we could use, together with the family $X_t$, to compute the cohomology of the more difficult smooth fiber $E_t$.

\item The simplicity of the polynomial equation defining $E_t$ was a big help. It let us find the singularity, and once the singularity was found, suggested how to simplify the geometry by removing the few high-degree terms in the equation.

\item The fibers $E_t$ have a nice description as topological tori, and the family $E_t$ has a nice description as a cover of the moduli space of elliptic curves, but we did not use either of these.

\item It was absolutely crucial that we were studying a family $E_t$, and not just a single Riemann surface. The fact that the generic member was smooth was also helpful.

\end{enumerate}

Let us also mention two more technical points about using vanishing cycles in general:

\begin{remark} Vanishing cycles theory is simplest when the singularities are isolated, i.e. the singular locus has dimension $0$. But even when the singularities are not isolated, it is better for the dimension of the singular locus to be smaller. This is especially crucial because of what we know about the degree of the vanishing cycle complex, which in general is a complex of sheaves and not just a sheaf. It was proven in \cite[I, Corollary 4.3]{sga7-i} that $R \Phi \mathbb Q$ has a property that was later called semiperversity - its $q$'th cohomology sheaf $\mathcal H^q(  R \Phi \mathbb Q)$ is supported in a set of dimension $\dim X_0 - q$.  Because any sheaf on an algebraic variety of dimension $n$ has cohomology in degree at most $2n$, this implies that $H^p ( \mathcal H^q( R \Phi \mathbb Q))$ vanishes unless \[ p \leq 2 ( \min ( \dim X_0-q, \dim ( \operatorname{support} ( R \Phi \mathbb Q) ))) \leq   \dim X_0- q + \dim ( \operatorname{support} ( R \Phi \mathbb Q) )),\]
which combined with the spectral sequence \[H^p ( \mathcal H^q( R \Phi \mathbb Q)) \mapsto H^{p+q} (X_0 , R\Phi \mathbb Q)\] implies that $H^i (X_0, R\Phi \mathbb Q)$ vanishes unless \[i \leq \dim X_0 + \dim ( \operatorname{support} ( R \Phi \mathbb Q) ) \leq \dim X_0 + \dim X_0^{\operatorname{Sing}},\] where $X_0^{\operatorname{Sing}}$ is the singular locus of $X_0$. This vanishing was used to great effect in \cite[appendix by Katz]{Hoo91}. Even when vanishing is not enough and more precise control of the vanishing cycles is needed, this is generally easier to achieve when the dimension of the singular locus is smaller. Finding an interesting space whose singular locus is unexpectedly small can itself represent substantial progress towards the solution of an arithmetic problem. \end{remark}

\begin{remark} It is required for the vanishing cycles exact sequence to hold that the map $f: X \to \mathbb C$ be proper, as otherwise a cycle could escape off to infinity, which would not be explained by any data locally in the fiber. Thus, to apply this method to noncompact spaces, it is necessary to compactify by adding additional points, usually taking the form of a ``boundary divisor" or ``divisor at infinity". Then one has to control the vanishing cycles for points at this divisor over the special fiber as well, and it is crucial to choose a compactification where this can be done. \end{remark} 

\section{Statistics of $L$-functions}\label{s-lf}

We begin this section with a brief precise statement of the question we will consider, before explaining in more detail the motivation behind it in \S\ref{ss-lf-bm}, and then finally studying it as a topological problem in \S\ref{ss-lf-C}. We will see that vanishing cycles can solve important special cases of this problem, but not yet the full problem - something that future work will hopefully be able to rectify.

Let $K$ be a field (either finite or the complex numbers), $g$ a polynomial of degree $m$ over $K$, $k$ a natural number, and $d_1,\dots,d_{2k}$ additional natural numbers.

Let $X_{(d_i)_{i=1}^{2k}, g}(K)$ be the set of tuples $(f_1,\dots, f_{2k})$ of polynomials over $K$, with $f_i$ monic of degree $d_i$, where $f_i$ and $g$ share no common factors, and where \[\prod_{i=1}^k f_i \equiv \prod_{i=k+1}^{2k} f_i \mod g.\]

Some natural number-theoretic problems over $\mathbb F_q[T]$ reduce to counting the elements in $X_{(d_i)_{i=1}^{2k}, g}(\mathbb F_q)$. This motivates us to consider the analogous question of calculating the compactly-supported cohomology of $X_{(d_i)_{i=1}^{2k}, g}(\mathbb C)$ (given its natural topology as a subspace of the space $\mathbb C^{ \sum_{i=1}^{2k} d_i}$ parameterizing tuples of monic polynomials of degree $d_i$).

\subsection{Background and motivation}\label{ss-lf-bm}

Fix a finite field $\mathbb F_q$ and polynomial ring $\mathbb F_q[T]$ in one variable over that finite field. Let $g \in \mathbb F_q[T]$ be a polynomial. We have the quotient ring $\mathbb F_q[T]/g$ and its group of invertible elements $\left( \mathbb F_q[T]/g\right)^\times$. The class in $\mathbb F_q[T]/g$ of a polynomial $f \in \mathbb F_q[t]$ is invertible if and only if $\gcd(f,g)=1$.

For $\chi: ( \mathbb F_q[T]/g )^\times \to \mathbb C^\times$ a character, we can form the Dirichlet $L$-function
\[ L(\chi,u) = \sum_{d=0}^{\infty} \Bigl( \sum_{ \substack { f \in \mathbb F_q[T] \\ \textrm{monic} \\ \textrm{degree }d \\ \gcd(f,g)=1}} \chi(f) \Bigr) u^d .\]

This is a close analogue of the usual Dirichlet $L$-function studied over the integers, which for $m \in \mathbb Z$ and $\chi: (\mathbb Z/m)^\times \to \mathbb C^\times$ is

\[ L(\chi,s) = \sum_{ \substack {n \in \mathbb Z \\ \textrm{positive} \\ \gcd(n,m)=1}} \chi(n) n^{-s} .\]

Here we think of the positive condition and monic condition as being analogous, and the $u^d$ terms and $n^{-s}$ terms as being analogous.

\begin{remark} We can make the second analogy more clear by setting $u=q^{-s}$, so $u^{\deg f} = q^{-s \deg f} = \left(q^{\deg f}  \right)^{-s}$, and noting that for an integer $n$, $|n|$ is the cardinality of the quotient ring $\mathbb Z/n$, so we should define $|f|$ to be the cardinality of the quotient ring $\mathbb F_q[T]/f$, which is $q^{\deg f}$. However, substituting $q^{-s}$ for $u$ would make our formulas slightly more complicated, so we avoid it for the rest of this argument. \end{remark}

A key fact about $L(\chi,u)$ is that it is almost always a polynomial in $u$:

\begin{lemma} Fix $g \in \mathbb F_q[T]$ of degree $m$ and $\chi: ( \mathbb F_q[T]/g )^\times \to \mathbb C^\times$ a nontrivial character. Then $L(\chi,u)$ is a polynomial in $u$ of degree $\leq m-1 $.\end{lemma}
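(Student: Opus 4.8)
The plan is to show directly that the coefficient of $u^d$ in $L(\chi,u)$ vanishes for every $d \geq m$; since $L(\chi,u)$ is manifestly a power series in $u$, this immediately exhibits it as a polynomial of degree $\leq m-1$. Write $c_d$ for that coefficient, i.e. $c_d = \sum \chi(f)$ where the sum runs over monic $f \in \mathbb F_q[T]$ of degree $d$ with $\gcd(f,g)=1$. Since $\chi(f)$ depends only on the class $\bar f \in (\mathbb F_q[T]/g)^\times$, the first step is to reorganize this sum according to residues modulo $g$: one gets $c_d = \sum_{a \in (\mathbb F_q[T]/g)^\times} \chi(a)\, N_d(a)$, where $N_d(a)$ denotes the number of monic polynomials of degree $d$ whose reduction mod $g$ equals $a$.

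The key computation is that $N_d(a)$ is independent of $a$ as soon as $d \geq m$. Fixing a representative of $a$ of degree $<m$, a monic polynomial $f$ of degree $d$ satisfies $f \equiv a \pmod g$ if and only if $f - a = hg$ for some $h \in \mathbb F_q[T]$; comparing degrees and leading coefficients, and using that $g$ is monic and $\deg a < m \leq d$, forces $h$ to be monic of degree $d-m$, and conversely every such $h$ produces a valid $f$. Hence $N_d(a) = q^{d-m}$ for all $a$ whenever $d \geq m$. (For $d < m$ this equidistribution genuinely fails — a monic polynomial of degree $d<m$ is already its own residue — which is precisely why the degree bound cannot be pushed below $m-1$.)

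Substituting back, $c_d = q^{d-m} \sum_{a \in (\mathbb F_q[T]/g)^\times} \chi(a)$ for all $d \geq m$, and this character sum is $0$ by orthogonality, since $\chi$ is a nontrivial character of the finite abelian group $(\mathbb F_q[T]/g)^\times$. Therefore $c_d = 0$ for $d \geq m$ and $L(\chi,u) = \sum_{d=0}^{m-1} c_d u^d$, a polynomial of degree $\leq m-1$. There is no real obstacle in this argument; the only point requiring care is the count of $N_d(a)$ — in particular tracking the monic normalization so that $h$ is forced to be monic of the correct degree and the count comes out genuinely independent of $a$. That independence, amounting to the statement that monic polynomials of a fixed large degree are equidistributed among residue classes mod $g$, is the crux, and everything else is formal.
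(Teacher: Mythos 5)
Your proof is correct and follows essentially the same approach as the paper: show the coefficient of $u^d$ vanishes for $d \geq m$ by counting that each invertible residue class mod $g$ is hit by exactly $q^{d-m}$ monic polynomials of degree $d$, then apply orthogonality of the nontrivial character. The paper phrases the count slightly differently (fix a representative of degree $<m$ and add $g$ times an arbitrary monic polynomial of degree $d-m$), but this is the same computation.
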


\begin{proof} It suffices to prove that the coefficient \[\sum_{ \substack { f \in \mathbb F_q[T] \\ \textrm{monic} \\ \textrm{degree }d \\ \gcd(f,g)=1}} \chi(f) \] of $u^d$ in $L(\chi,u)$ vanishes for $d \geq m$. To do this, note that when $d \geq m$, any residue class in $\mathbb F_q[t]/g$ occurs for exactly $q^{d-m}$ monic polynomials $f$ of degree $d$. Indeed we can represent any class in $\mathbb F_q[t]/g$ by a polynomial of degree $m-1$, and then add the product of $g$ by any monic polynomial of degree $d-m$ to find such an $f$. Because there are $q^{d-m}$ monic polynomials of degree $d$, we have $q^{d-m}$ choices for $f$. So the coefficient of $u^d$ in $L(\chi,u)$ equals
\[ q^{d-m} \sum_{ \substack { f \in (\mathbb F_q[T]/g )^\times }} \chi(f).\]

However, \[ \sum_{ \substack { f \in (\mathbb F_q[T]/g )^\times }} \chi(f)= 0 \] because the sum of any nontrivial character over the elements of a group vanishes. So the coefficient equals zero, as desired. \end{proof}

In fact, Weil proved \cite{Weil}: 

\begin{theo}\label{weil} Fix $g \in \mathbb F_q[T]$ of degree $m$ and $\chi: ( \mathbb F_q[T]/g )^\times \to \mathbb C^\times$ a character. Assume that

\begin{enumerate}

\item There does not exist a polynomial $g'$ of degree $<m$ dividing $g$ such that $\chi(f)$ depends only on the congruence class of $f$ modulo $g'$.

\item The restriction of $\chi$ to $\mathbb F_q^\times \subseteq  (\mathbb F_q[T]/g )^{\times}$ is a nontrivial character of $\mathbb F_q^\times$. 

\end{enumerate}

Then $L(\chi,u)$ is a polynomial in $u$ of degree $m-1$, with all roots $\alpha \in \mathbb C$ satisfying $|\alpha| =q^{-1/2}$. 

\end{theo}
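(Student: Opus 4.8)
The plan is to realize $L(\chi,u)$ as the $L$-function of a rank-one $\ell$-adic sheaf on an open curve, and then extract both assertions from the Grothendieck--Lefschetz trace formula, the Euler--Poincar\'e formula, and the Riemann Hypothesis for curves over finite fields. Fix a prime $\ell\ne\operatorname{char}\mathbb F_q$ and a field isomorphism $\overline{\mathbb Q}_\ell\cong\mathbb C$. Let $U=\operatorname{Spec}\mathbb F_q[T][1/g]\subseteq\mathbb A^1_{\mathbb F_q}\subseteq\mathbb P^1_{\mathbb F_q}$; its closed points are exactly the monic irreducible polynomials prime to $g$, and its effective divisors are the monic polynomials prime to $g$. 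The one input from class field theory is that the Dirichlet character $\chi$ gives rise to a lisse rank-one $\overline{\mathbb Q}_\ell$-sheaf $\mathcal L_\chi$ on $U$ with $\tr(\Frob_x\mid\mathcal L_\chi)=\chi(P_x)$, where $P_x$ is the monic irreducible attached to the closed point $x$. Because $\chi$ is completely multiplicative on polynomials prime to $g$, expanding the Euler product gives the exact identity $L(\chi,u)=\prod_{x\in|U|}(1-\chi(P_x)u^{\deg x})^{-1}=L(U,\mathcal L_\chi,u)$.

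Next one applies the cohomological form of the trace formula, $L(U,\mathcal L_\chi,u)=\prod_i\det(1-u\Frob\mid H^i_c(U_{\overline{\mathbb F}_q},\mathcal L_\chi))^{(-1)^{i+1}}$. Since $U$ is a smooth affine curve only $i=1,2$ survive, and $H^2_c(U_{\overline{\mathbb F}_q},\mathcal L_\chi)=0$ because its Poincar\'e dual $H^0(U_{\overline{\mathbb F}_q},\mathcal L_\chi^\vee)$ vanishes: $\mathcal L_\chi$ is geometrically nontrivial, being (totally, as it is rank one) ramified at every prime dividing $g$ since $\chi$ is a nontrivial character. Hence $L(\chi,u)=\det(1-u\Frob\mid H^1_c(U_{\overline{\mathbb F}_q},\mathcal L_\chi))$ is a polynomial of degree $\dim H^1_c=-\chi_c(U_{\overline{\mathbb F}_q},\mathcal L_\chi)$. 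The Euler--Poincar\'e (Grothendieck--Ogg--Shafarevich) formula evaluates this Euler characteristic as $\operatorname{rk}(\mathcal L_\chi)\cdot\chi_c(U_{\overline{\mathbb F}_q})-\sum_v\operatorname{Sw}_v(\mathcal L_\chi)$, the sum over the $\overline{\mathbb F}_q$-points of $\mathbb P^1$ missing from $U$, namely $\infty$ and the roots of $g$. This is where the two hypotheses enter: condition (1) forces the Artin conductor of $\mathcal L_\chi$ at a prime $P\mid g$ to be exactly the exponent $a_P$ of $P$ in $g$ (if $\chi$ came from a smaller modulus this would drop and $L(\chi,u)$ would pick up spurious Euler factors with roots of absolute value $1$), so $\operatorname{Sw}_v=a_P-1$ at each of the $\deg P$ points above $P$; condition (2) forces $\mathcal L_\chi$ to be ramified at $\infty$, and it is tamely ramified there so $\operatorname{Sw}_\infty=0$. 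Equivalently, conditions (1)--(2) say precisely that the conductor of the associated Hecke character is $g\cdot\infty$, of degree $m+1$. Inserting $\chi_c(U_{\overline{\mathbb F}_q})=2-1-\#\{\text{roots of }g\}$ and $\sum_v\operatorname{Sw}_v=\sum_{P\mid g}(\deg P)(a_P-1)=m-\#\{\text{roots of }g\}$, the bookkeeping gives $\dim H^1_c=m-1$, the first assertion; this is the general fact that a primitive $L$-function of conductor $\mathfrak f$ over $\mathbb P^1$ has degree $\deg\mathfrak f-2$.

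For the Riemann Hypothesis it remains to show that every eigenvalue of $\Frob$ on $H^1_c(U_{\overline{\mathbb F}_q},\mathcal L_\chi)$ has absolute value $q^{1/2}$, for then $L(\chi,u)=\prod_j(1-\alpha_j u)$ with $|\alpha_j|=q^{1/2}$ and its roots $\alpha_j^{-1}$ all satisfy $|\alpha_j^{-1}|=q^{-1/2}$. This is the one genuinely deep ingredient. The classical route is Weil's Riemann Hypothesis for curves \cite{Weil}: as $\chi$ has finite order $n$ it is trivialized by the connected cyclic cover $C\to\mathbb P^1_{\mathbb F_q}$ of degree $n$ that it cuts out, and because $\mathcal L_\chi$ is totally ramified at every point of $\mathbb P^1\setminus U$ one has $H^1_c(U_{\overline{\mathbb F}_q},\mathcal L_\chi)=H^1(\mathbb P^1_{\overline{\mathbb F}_q},j_!\mathcal L_\chi)$, a Galois-equivariant direct summand (after a harmless finite base extension) of $H^1(C_{\overline{\mathbb F}_q},\overline{\mathbb Q}_\ell)$; Weil's bound $|\text{eigenvalue}|=q^{1/2}$ on the latter then transfers to the summand. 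Alternatively one quotes Deligne's Weil II: $\mathcal L_\chi$ is pure of weight $0$, geometrically irreducible and non-constant, so $H^1_c$ of the open curve $U$ with these coefficients is pure of weight $1$. The main obstacle is exactly this last step: everything preceding it is formal manipulation of the trace formula and the Euler--Poincar\'e formula, whereas establishing purity here amounts to the Riemann Hypothesis for the curves $C$, the deepest of the Weil conjectures in this setting.
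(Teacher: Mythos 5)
The paper does not actually prove Theorem~\ref{weil}: it attributes the result to Weil \cite{Weil} and in the remark following the statement only sketches the strategy, namely that the $L$-function appears as a factor of the zeta function of a ``cyclotomic'' covering curve $C/\mathbb P^1$, so that the Riemann hypothesis for $C$ gives the bound on the roots. Your argument is a correct modern $\ell$-adic formalization of exactly this strategy, and it supplies the degree computation (which the paper does not even sketch) via Grothendieck--Ogg--Shafarevich. The two places where your write-up is slightly compressed: first, the vanishing of $H^2_c$ really rests on geometric nontriviality of $\mathcal L_\chi$, which is most cleanly seen from the ramification at $\infty$ guaranteed by condition~(2) (your phrase ``ramified at every prime dividing $g$ since $\chi$ is a nontrivial character'' conflates nontriviality of $\chi$ with full primitivity, which is what actually gives ramification at every $P\mid g$); second, in the Weil~II alternative, purity of $H^1_c$ does not follow from Weil~II alone --- Weil~II gives only weights $\leq 1$ on $H^1_c$, and one must also observe that the forget-supports map $H^1_c\to H^1$ is an isomorphism (again because $\mathcal L_\chi$ has no inertia-invariants at any boundary point) and then use weights $\geq 1$ on $H^1$ from the dual statement. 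Your classical route via $H^1(C_{\overline{\mathbb F}_q},\overline{\mathbb Q}_\ell)$ avoids this subtlety because that group is the $H^1$ of a smooth projective curve and is genuinely pure by Weil's theorem, which is also the route the paper's remark gestures at.
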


\begin{remark} Weil proved that the roots satisfy $|\alpha| =q^{-1/2}$ as a special case of his Riemann hypothesis for curves over finite fields. The strategy is similar to the classical argument in number theory that writes the Dedekind zeta function of a cyclotomic field as a product of Dirichlet $L$-functions, so that the Riemann hypothesis for zeta functions of cyclotomic fields would imply the Riemann hypothesis for $L$-functions of characters. The function fields of the curves used in this proof are analogues of the cyclotomic fields, which can be defined using the Carlitz module or using explicit equations similar to \eqref{magic-Lang-formula} below.\end{remark}

\begin{remark} Under assumption (1), we say that $\chi$ is \emph{primitive}, and under assumption (2), we say that $\chi$ is \emph{odd}. \end{remark}

\begin{remark}When the character is not primitive and odd, there is always a simple way to modify $L(\chi,u)$ to obtain a polynomial, of degree $<m-1$, with all roots $\alpha$ satisfying $|\alpha| = q^{-1/2}$, by removing certain \emph{trivial zeroes} which are roots of absolute value $q^{-1}$.

When the character is not primitive, this can be achieved by simply replacing $g$ with the polynomial $g'$ discussed in assumption (1), which is why it is usually fine to restrict attention to the case of primitive characters.\end{remark}

Theorem \ref{weil} is expected to be essentially the strongest result that is true about all $L(\chi,u)$ - in other words, any polynomial of degree $m-1$ with all roots on the circle of radius $q^{-1/2}$ could well appear as some $L(\chi,u)$. (If we allow the parameter $q$ to go to $\infty$, a precise version of this expectation is known to be true in the case where $g$ is squarefree \cite{Katz-PDC}.) Thus, when we study these $L$-functions after Weil, we usually study them in a \emph{statistical} manner - we wish to know how the roots or values of this $L$-function behave, as random variables, when we select $\chi$ uniformly at random from the set of all characters (or all primitive odd characters - we will ignore this technical distinction.)

One fundamental statistical question about the $L$-function are its moments, or more precisely, the moments of its values at a particular point.

\begin{question}\label{moments-question} For $g \in \mathbb F_q[T]$ of degree $m$ and $u \in \mathbb C$, what is

\[ \frac{1}{ | (\mathbb F_q[T]/g)^\times | } \sum_{ \chi: (\mathbb F_q[T]/g)^{\times} \to \mathbb C^{\times}} |L (\chi,u)|^{2k} ?\]

\end{question}

This question is considered most interesting when $u= |q^{-1/2}|$, because for larger, respectively smaller, values of $|u|$, the high degree, respectively low degree terms dominate $L(\chi,u)$, making the problem simpler.

This question, and its variants, have been heavily studied in number theory over the integers, and studied, though not as heavily, in the setting of $\mathbb F_q[T]$. Precise formulas predicting the value of this moment in the limit as $m$ goes to $\infty$ have been conjectured \cite{CFKRS,AK}, but they are known to hold only for $k=1$ and $k=2$ (and in the $k=2$ case, the proven bounds on the error term are much worse than the expected bounds, so more progress is possible there).

We can convert this moment problem to a counting problem by using the definition of $L(\chi,u)$ to write

\[ \sum_{ \chi: (\mathbb F_q[T]/g)^\times \to \mathbb C^\times} |L (\chi,u)|^{2k}\] \[  = \sum_{ \chi: (\mathbb F_q[T]/g)^\times \to \mathbb C^\times}  \Bigl( \sum_{d=0}^{\infty} \Bigl( \sum_{ \substack { f \in \mathbb F_q[T] \\ \textrm{monic} \\ \textrm{degree }d \\ \gcd(f,g)=1}} \chi(f) \Bigr) u^d \Bigr)^k \Bigl( \sum_{d=0}^{\infty} \Bigl( \sum_{ \substack { f \in \mathbb F_q[T] \\ \textrm{monic} \\ \textrm{degree }d \\ \gcd(f,g)=1}} \overline{\chi(f)} \Bigr) \overline{u}^d \Bigr)^k \] \[ = \sum_{ \chi: (\mathbb F_q[T]/g)^\times \to \mathbb C^\times} \sum_{d_1,\dots, d_{2k} =0}^{\infty}  \sum_{ \substack { f_1,\dots,f_{2k}  \in \mathbb F_q[T] \\ \textrm{monic} \\ \deg(f_i)= d_i \\ \gcd(f_i,g)=1}} \chi(f_1) \dots \chi(f_k)  \overline{\chi(f_{k+1})} \dots \overline{\chi(f_{2k})} u^{ d_1+ \dots + d_k} \overline{u}^{d_{k+1} + \dots + d_{2k} }\]
\[ =\sum_{d_1,\dots, d_{2k} =0}^{\infty} \Bigl(  \sum_{ \substack { f_1,\dots,f_{2k}  \in \mathbb F_q[T] \\ \textrm{monic} \\ \deg(f_i)= d_i \\ \gcd(f_i,g)=1}}  \sum_{ \chi: (\mathbb F_q[T]/g)^\times \to \mathbb C^\times} \chi(f_1) \dots \chi(f_k)  \overline{\chi(f_{k+1})} \dots \overline{\chi(f_{2k})} \Bigr) u^{ d_1+ \dots + d_k} \overline{u}^{d_{k+1} + \dots + d_{2k} } .\]
(Technically, the sum over $d_1,\dots, d_{2k}$ is divergent when $\chi$ is the trivial character because $L(1,u)$ has a pole, and so one should subtract that term off before exchanging the order of summation. We will ignore this issue to simplify our formulas and focus on the most important steps. We could instead restrict the sum to primitive odd characters using a suitable inclusion-exclusion, but the main difficulty would still be the same.)

Using orthogonality of characters, one deduces that \[ \sum_{ \chi: (\mathbb F_q[T]/g)^\times \to \mathbb C^\times} \chi(f_1) \dots \chi(f_k)  \overline{\chi(f_{k+1})} \dots \overline{\chi(f_{2k})} = \begin{cases} | (\mathbb F_q[T]/g)^\times| &   \textrm{if } f_1\dots f_k \equiv f_{k+1} \dots f_{2k}  \mod g \\ 0 & \textrm{otherwise} \end{cases} \] so we conclude that

\[ \frac{1}{ | (\mathbb F_q[T]/g)^\times | } \sum_{ \chi: (\mathbb F_q[T]/g)^\times \to \mathbb C^\times} |L (\chi,u)|^{2k} \]\[ = \sum_{d_1,\dots, d_{2k} =0}^{\infty} \Bigl(  \sum_{ \substack { f_1,\dots,f_{2k}  \in \mathbb F_q[T] \\ \textrm{monic} \\ \deg(f_i)= d_i \\ \gcd(f_i,g)=1 \\ \prod_{i=1}^k f_i\equiv \prod_{i=k+1}^{2k} f_i \mod g }} 1 \Bigr)u^{ d_1+ \dots + d_k} \overline{u}^{d_{k+1} + \dots + d_{2k} } .\]

To calculate this, it suffices to estimate
\begin{equation}\label{counting-to-estimate} \sum_{ \substack { f_1,\dots,f_{2k}  \in \mathbb F_q[T] \\ \textrm{monic} \\ \deg(f_i)= d_i \\ \gcd(f_i,g)=1 \\ \prod_{i=1}^k f_i\equiv \prod_{i=k+1}^{2k} f_i \mod g }} 1  .\end{equation}

In fact we need only estimate this for $0 \leq d_1,\dots d_{2k} \leq \deg g-1$ as if $d_i \geq \deg g$ there will be exactly $q^{d_i - \deg g}$ possible values of $f_i$ in each residue class mod $g$, which will simplify \eqref{counting-to-estimate} to an explicit formula which we can sum as a geometric series. (Or we can view these terms as the contribution of the trivial character $\chi$ and subtract them off.)   

The estimate we seek for \eqref{counting-to-estimate} will consist of a main term given as an explicit function of $d_1,\dots, d_{2k}$  and an error term which is unknown but bounded by an explicit function in $d_1,\dots, d_{2k}$. We then sum both terms over $d_i$ from $0$ to $\deg g-1$ to get an explicit estimate for the moment of $L(\chi, u)$. (This sum, because its length $(\deg g)^{2k}$ is relatively short, is much easier than the sum defining \eqref{counting-to-estimate}, which we expect to provide the main difficulty.)

The sum \eqref{counting-to-estimate} we have focused on is simply the counting problem
\[ \Bigl | \Bigl \{ f_1,\dots, f_{2k} \in \mathbb F_q[T] \mid f_i \textrm{ monic}, \deg(f_i)=d_i, \gcd(f_i,g)=1,  \prod_{i=1}^k f_i\equiv \prod_{i=k+1}^{2k} f_i \mod g \Bigr \} \Bigr | \] which by definition is \[ \left| X_{(d_i)_{i=1}^{2k}, g} (\mathbb F_q) \right| .\]

Next, we can express $X_{(d_i)_{i=1}^{2k}, g} (\mathbb F_q)$ as the solution to a system of equations over $\mathbb F_q$, minus the solutions of another system of polynomial equations. 

First, we describe the set of tuples of monic polynomials $f_i$ over $\mathbb F_q$, with $f_i$ of degree $d_i$. To do this, we need  $d_i$ variables for the coefficients of each $f_i$ (other than the leading coefficient, which is fixed at $1$), for a total of $\sum_{i=1}^k d_i$ variables.

Next, we express the condition that $\prod_{i=1}^k f_i\equiv \prod_{i=k+1}^{2k} f_i \mod g $ as a system of polynomial equations. The first step is to observe, that the coefficients of $\prod_{i=1}^k f_i$ and $\prod_{i=k+1}^{2k} f_i$ are each polynomial functions of degree $\leq k$ in the coefficients of the $f_i$, which we have chosen to be our variables. To check if $\prod_{i=1}^k f_i$ and $\prod_{i=k+1}^{2k} f_i$ are congruent mod $g$, we divide each of them by $g$ and then take the remainder, using polynomial long division. Examining the polynomial long division algorithm, one can see that the coefficients of these remainders are respectively linear functions in the coefficients of $\prod_{i=1}^k f_i$ and $\prod_{i=k+1}^{2k} f_i$, and thus are polynomial functions in the coefficients of the $f_i$. Equating the two remainders is equivalent to equating each of their $m$ coefficients, and thus it defines $m$ polynomial equations in the coefficients of the $f_i$.

Finally, we express the condition  $\gcd (f_i,g)=1$ as the negation of another system of polynomial equations. To do this, note that it is equivalent to asking that, for each prime polynomial $\pi$ dividing $g$, $f_i \not \equiv 0 \mod \pi$. Again using polynomial long division, we can express $f_i \equiv 0 \mod \pi$ as a system of equations in the coefficients of $f_i$. (Another approach is to use polynomial resultants.) 

Thus, we have described $X_{(d_i)_{i=1}^{2k}, g} (\mathbb F_q)$ as the $\mathbb F_q$-points of an algebraic variety $X_{(d_i)_{i=1}^{2k}, g}$. Therefore,  general principles (i.e. the Grothendieck-Lefschetz fixed point formula) allow us to reduce the problem of counting $X_{(d_i)_{i=1}^{2k}, g} (\mathbb F_q)$ to calculating the cohomology $H^i_c ( X_{(d_i)_{i=1}^{2k}, g, \overline{\mathbb F}_q}, \mathbb Q_\ell )$. This motivates our study of the analogous cohomology problem over the complex numbers.

\subsection{The space over $\mathbb C$} \label{ss-lf-C}

Let us now assume that the base field $K$ is the complex numbers.

In this case, our description of $X_{ (d_i)_{i=1}^{2k}, g }$ can be simplified. Because $g$ is a polynomial of degree $m$, it has exactly $m$ roots, counted with multiplicity. Let $\alpha_1,\dots, \alpha_m$ be these roots. Assume for simplicity that these roots are distinct. A polynomial is a multiple of $g$ if and only if its value at each of these roots vanishes, so two polynomials are congruent mod $g$ if and only if they have the same value at each of these roots. Thus we can re-write the definition of  $X_{ (d_i)_{i=1}^{2k}, g }$ as follows:

Let $X_{(d_i)_{i=1}^{2k}, (\alpha_i)_{i=1}^m}(\mathbb C)$ be the space of tuples $f_1,\dots, f_{2k}$ of monic polynomials with coefficients in $\mathbb C$, such that $\deg(f_i) =d_i$,  $f_i(\alpha_j) \neq 0$ for all $i$ from $1$ to $2k$ and $j$ from $1$ to $m$, and $\prod_{i=1}^k f_i(\alpha_j) =\prod_{i=k+1}^{2k} f_i (\alpha_j)$ for all $j$ from $1$ to $m$.    

In this case, we have $X_{ (d_i)_{i=1}^{2k}, g } (\mathbb C) = X_{(d_i)_{i=1}^{2k}, (\alpha_i)_{i=1}^m}(\mathbb C)$.

We want to study the cohomology with compact supports $H^*_c( X_{(d_i)_{i=1}^{2k} , (\alpha_i)_{i=1}^m}, \mathbb Q)$.

Let us see why this space satisfies the properties of Desiderata \ref{intro-vc-desiderata}.
\begin{enumerate}

\item $X_{(d_i)_{i=1}^{2k} , (\alpha_i)_{i=1}^m}$ can admit singularities. By definition, the singularities are the points where the $m \times \sum_{i=1}^{2k} d_i$ Jacobian matrix of the system of polynomials $\prod_{i=1}^k f_i(\alpha_j) - \prod_{i=k+1}^{2k} f_i (\alpha_j)$, for $j=1$ to $m$, does not have the maximum possible rank $\min(m, \sum_{i=1}^{2k} d_i)$. (This definition is justified by the fact that these will be the points where the implicit function theorem does not force $X_{(d_i)_{i=1}^{2k} , (\alpha_i)_{i=1}^m}$  to be a manifold.)

With this definition in hand, finding the singularities is an elementary problem of polynomial algebra. We will see later in Lemma \ref{singularity-characterization} that as soon as $\sum_{i=1}^{2k} d_i \geq m$, a point $(f_1,\dots, f_{2k} ) \in X_{(d_i)_{i=1}^{2k} , (\alpha_i)_{i=1}^m}(\mathbb C)$ is singular if and only if there exists a polynomial $h$, of degree $<m$, such that $h$ is a polynomial multiple of $f_i$ for all $i$.

An interesting point about these singularities is that they do not seem to appear in the number-theoretic analysis of this problem - none of the analytic tools that are used to attack the moments of $L$-functions suggest that the case where the variables all divide some number which is not too large is particularly special. They singularities only become visible when we first transfer to the function field setting and then interpret geometrically.

\item We see from our explicit description that $X_{(d_i)_{i=1}^{2k} , (\alpha_i)_{i=1}^m}(\mathbb C)$ is the solution set of $m$ equations of degree $k$ in $\sum_{i=1}^{2k} d_i$ variables, minus the solutions of $2km$ linear equations. These numbers are all relatively small - for some interesting spaces, our degrees or quantities of equations grow exponentially in terms of our parameters of interest, and this is much better. Moreover, the individual equations can be expressed nicely in terms of polynomials, which helps us do things like give a precise description of the singular locus. 

\item  While the space of all monic polynomials  $f_i$ of degree $d_i$ with $f_i(\alpha_j)\neq 0$ for $j$ from $1$ to $m$ has a nice description as a configuration space of $d_i$ points, which may collide, on $\mathbb C- \{\alpha_1,\dots , \alpha_m\}$, the condition that $\prod_{i=1}^k f_i(\alpha_j) =\prod_{i=k+1}^{2k} f_i (\alpha_j)$  does not seem to have any meaning in terms of configuration spaces, and thus there is no known purely topological description.

\item We can deform $X_{(d_i)_{i=1}^{2k} , (\alpha_i)_{i=1}^m}$ to a family of similar, but distinct spaces by varying the defining equations slightly.  More precisely, we can deform the equations $\prod_{i=1}^k f_i(\alpha_j) =\prod_{i=k+1}^{2k} f_i (\alpha_j)$ into equations $\prod_{i=1}^k f_i(\alpha_j) =c_j \prod_{i=k+1}^{2k} f_i (\alpha_j)$ for $c_j \in \mathbb C^\times$. Doing this has a dramatic effect on the singularities. In fact, one can check that for generic choices of $c$, there are no singularities at all, as the dimension of the space of possible $h$ in Lemma \ref{singularity-characterization} is less than the dimension of the space of possible tuples $c_j$. 

We can also vary the roots $\alpha_1,\dots, \alpha_m$.  However, varying the roots does not change the singularities much, and so varying $c$ seems most useful for vanishing cycles theory.

\end{enumerate}

Let us now check our characterization of these singularities, which is essentially due to Hast and Matei \cite{HastMatei}.

\begin{lemma}\label{singularity-characterization}Assume that $\sum_{i=1}^{2k} d_i\geq m$. Let $(f_1,\dots, f_{2k})$ be a point of $X_{(d_i)_{i=1}^{2k} , (\alpha_i)_{i=1}^m}$.  Then $(f_1,\dots,f_{2k})$ is a singular point of $X_{(d_i)_{i=1}^{2k} , (\alpha_i)_{i=1}^m}$ if and only if there exists an auxiliary polynomial $h$, of degree $<m$, which the $f_i$ all divide. \end{lemma}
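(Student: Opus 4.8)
The plan is to work directly from the definition of ``singular point'' recalled just before the lemma. Coordinatize the ambient space by the non-leading coefficients $a_{i,l}$ ($0\le l<d_i$) of the $f_i$, and let $\Phi$ be the map to $\mathbb C^m$ whose $j$-th coordinate is $\prod_{i=1}^k f_i(\alpha_j)-\prod_{i=k+1}^{2k}f_i(\alpha_j)$, so that the space $X=X_{(d_i)_{i=1}^{2k},(\alpha_i)_{i=1}^m}$ is the preimage $\Phi^{-1}(0)$ inside the open locus where all $f_i(\alpha_j)\neq 0$. The hypothesis $\sum_i d_i\ge m$ guarantees that the maximal possible rank of the Jacobian of $\Phi$ is $m$, so the given point is singular exactly when the differential $d\Phi$ there --- viewed as a linear map out of the tangent space, which I identify with the space of tuples $(\dot f_1,\dots,\dot f_{2k})$ of polynomials with $\deg\dot f_i<d_i$ --- fails to be surjective onto $\mathbb C^m$.

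Next I would make the target concrete. Since the $\alpha_j$ are distinct, evaluation at $\alpha_1,\dots,\alpha_m$ identifies $\mathbb C^m$ with the ring $R:=\mathbb C[T]/(g)$; write $\bar f$ for the class of $f\in\mathbb C[T]$. At a point of $X$ we have $\prod_{i\le k}\bar f_i=\prod_{i>k}\bar f_i=:\bar v$, and $\bar v\in R^\times$ because each $f_i(\alpha_j)\neq 0$. Differentiating the products gives $d\Phi(\dot f_1,\dots,\dot f_{2k})=\bar v\cdot\bigl(\sum_{i\le k}\bar f_i^{-1}\overline{\dot f_i}-\sum_{i>k}\bar f_i^{-1}\overline{\dot f_i}\bigr)$. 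As $\bar v$ is a unit, $d\Phi$ is surjective if and only if $\sum_{i=1}^{2k}\bar f_i^{-1}W_i=R$, where $W_i:=\{\bar\phi:\phi\in\mathbb C[T],\ \deg\phi<d_i\}\subseteq R$. So the point is singular if and only if $\sum_i\bar f_i^{-1}W_i$ is a proper subspace of $R$, i.e.\ if and only if some nonzero linear functional on $R$ annihilates every $\bar f_i^{-1}W_i$.

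To conclude I would use that, $g$ being squarefree, $R\cong\mathbb C^m$ and the trace (residue) pairing $\langle\bar a,\bar b\rangle=\sum_j a_jb_j/g'(\alpha_j)$ is nondegenerate; every linear functional on $R$ is thus $\bar b\mapsto\langle\bar h,\bar b\rangle$ for a unique polynomial $h$ of degree $<m$, with $\bar h\neq 0\iff h\neq 0$. Such a functional kills $\bar f_i^{-1}W_i$ precisely when $\bar h\bar f_i^{-1}\in W_i^{\perp}$, and a short computation --- expanding $\psi/g$ at infinity, or via partial fractions --- identifies $W_i^{\perp}$ with $\{\bar\psi:\deg\psi<m-d_i\}$. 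So the condition is that the degree-$<m$ representative $u_i$ of $\bar h\bar f_i^{-1}$ satisfies $\deg u_i<m-d_i$; then $u_if_i$ has degree $<m$ and is congruent to $h$ modulo the degree-$m$ polynomial $g$, hence $u_if_i=h$, so $f_i\mid h$. Conversely, if $f_i\mid h$ then $h/f_i$ has degree $<m-d_i$ and represents $\bar h\bar f_i^{-1}$. Running this over $i=1,\dots,2k$, the point is singular if and only if there is a nonzero polynomial $h$ of degree $<m$ that every $f_i$ divides, as claimed (with $h$ understood to be nonzero --- otherwise the condition is vacuous).

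The step I expect to be the real obstacle is this last paragraph: setting up the self-duality of $R$, computing $W_i^{\perp}$, and in particular the degree bookkeeping that turns ``the canonical representative of $\bar h\bar f_i^{-1}$ has degree $<m-d_i$'' into the divisibility statement ``$f_i\mid h$'' (using that, once $f_i\mid h$, the bound $\deg(h/f_i)<m-d_i$ is equivalent to $\deg h<m$). It is also worth being explicit about where the hypotheses enter: $\sum_i d_i\ge m$ is used only to reduce ``singular'' to ``$d\Phi$ not surjective'', while distinctness of the $\alpha_j$ is used both to identify $\mathbb C^m$ with $R$ and to make the trace pairing nondegenerate.
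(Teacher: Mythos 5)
Your proof is correct and follows essentially the same route as the paper's: reduce singularity to non-surjectivity of the differential $d\Phi$ (using $\sum_i d_i\ge m$ to know the target has dimension $m$), then recognize the obstruction as a polynomial $h$ of degree $<m$ that every $f_i$ divides. The presentational difference is that you organize both implications of the ``if and only if'' at once through the residue pairing $\langle\bar a,\bar b\rangle=\sum_j a(\alpha_j)b(\alpha_j)/g'(\alpha_j)$ and the single identity $W_i^\perp=\{\bar\psi:\deg\psi<m-d_i\}$, whereas the paper treats the two directions separately: a dimension count after clearing denominators by $h$ in one direction, and in the other a Lagrange-interpolation construction of polynomials $e_i$ with $e_i(\alpha_j)=c_j/(f_i(\alpha_j)\lambda_j)$ followed by a degree-lowering induction (apply the orthogonality to $\tilde f=1,T,\dots,T^{d_i-1}$). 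Under the identifications $\lambda_j=1/g'(\alpha_j)$ and $e_i=u_i$, the paper's degree induction is precisely your computation of $W_i^\perp$, and the paper's final step ``$f_ie_i$ is determined by its values, hence equals a common $h$'' is your ``$u_if_i\equiv h\pmod g$ with both sides of degree $<m$.'' So the mathematical content is the same; your duality packaging just makes the equivalence visibly symmetric and compresses the reverse direction. Your explicit caveat that $h$ must be nonzero is a worthwhile clarification that the paper leaves implicit (its argument uses ``denominator divides $h$'' and $\deg h$, which tacitly require $h\neq 0$).
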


\begin{proof} By definition, $(f_1,\dots,f_{2k})$ is a singular point if and only if the Jacobian matrix of the system of polynomials $ \prod_{i=1}^k f_i (\alpha_j) - \prod_{i=k+1}^{2k} f_i(\alpha_j)$ for $j$ from $1$ to $m$ has rank less than its maximum value, which is $\min (m, \sum_{i=1}^{2k} d_i)= m$. 

Let us first check that if such an $h$ exists, then $f_1,\dots, f_{2k}$ is singular. To do this, we use the fact that the image of the Jacobian matrix consists of the derivatives of this tuple of $m$ polynomials along every path in $\mathbb C^{ \sum_{i=1}^{2k} d_i}$ starting at $(f_1,\dots, f_{2k})$. We can view this path as as a one-parameter family of polynomials $f_i (\alpha_j,t)$ in an additional variable $t$, where $f_i(\alpha,0)=f_i(\alpha)$. Then the derivative along this path is
\[ \frac{d}{dt} \left( \prod_{i=1}^k f_i(\alpha_j, t) - \prod_{i=k+1}^{2k} f_i (\alpha_j,t)\right)\]
\[  =\left(  \prod_{i=1}^k f_i(\alpha_j, t)  \right) \sum_{i=1}^k   \frac{ \frac{d}{dt} f_i(\alpha_j, t)} { f_i(\alpha_j, t)}  - \left(  \prod_{i=k+1}^{2k} f_i(\alpha_j, t)  \right) \sum_{i=k+1}^{2k}  \frac{ \frac{d}{dt} f_i(\alpha_j, t)} { f_i(\alpha_j, t)} \]

We are interested in taking the derivative at a point which lies in $X_{(d_i)_{i=1}^{2k} , (\alpha_i)_{i=1}^m}$ so we may assume that the equations are actually satisfied at $t=0$, which causes the derivative at $t=0$ to simplify to
 
\begin{equation}\label{derivative-formula} \left(  \prod_{i=1}^k f_i(\alpha_j)  \right)   \left(  \sum_{i=1}^k   \frac{ \frac{d}{dt} f_i(\alpha_j, t)} { f_i(\alpha_j)} -  \sum_{i=k+1}^{2k} \frac{ \frac{d}{dt} f_i(\alpha_j, t)} { f_i(\alpha_j)} \right) .\end{equation}
Now each term $\frac{ \frac{d}{dt} f_i(\alpha, t) }{ f_i(\alpha,t)}$ is a rational function in $\alpha$ that vanishes at $\infty$ (because the leading coefficient of each $f_i$ is fixed at $1$) and whose denominator divides $h$ (by assumption). Hence \[ \sum_{i=1}^k   \frac{ \frac{d}{dt} f_i(\alpha, t)} { f_i(\alpha)} -  \sum_{i=k+1}^{2k} \frac{ \frac{d}{dt} f_i(\alpha, t)} { f_i(\alpha)} \] is a rational function in $\alpha$ that vanishes at $\infty$ and whose denominator divides $h$. Because this rational function vanishes at $\infty$, the degree of its numerator is strictly less than the degree of its denominator, which is at most $\deg h$ because it divides $h$. So the numerator of this sum is a polynomial degree $< \deg h  $. There are only $\deg h $ linearly independent polynomials of degree $< \deg h$. Thus for different one-parameter families $f_i(t)$ with the same $f_i(0)$, we can obtain at most $\deg h$ linearly independent tuples \[ \left( \frac{d}{dt} \left( \prod_{i=1}^k f_i(\alpha_j, t) - \prod_{i=k+1}^{2k} f_i (\alpha_j,t)\right)\right)_{j=1}^m .\]  Thus, the image of the Jacobian matrix has dimension at most $\deg h$, and it does not have full rank.

Conversely, suppose the Jacobian does not have full rank. Then there is a linear combination of the derivatives \eqref{derivative-formula} which vanishes for all families $f_i(\alpha_j,t)$. Thus there exist constants $c_j$ such that 
\[  \sum_{j=1}^m c_j  \left(  \sum_{i=1}^k   \frac{ \frac{d}{dt} f_i(\alpha_j, t)} { f_i(\alpha_j)} -  \sum_{i=k+1}^{2k} \frac{ \frac{d}{dt} f_i(\alpha_j, t)} { f_i(\alpha_j)} \right) =0 \] By Lagrange interpolation, a polynomial $\tilde{f}$ of degree $\leq m-1$ is uniquely determined by its values on $\alpha_1,\dots, \alpha_m$, so there exist constants $\lambda_j$ such that the coefficient of $T^{m-1} $ in $\tilde{f}$ is $\sum_{j=1}^{m} \lambda_j \tilde{f}(\alpha_j) $. Furthermore $\lambda_j\neq 0$ because there exists a degree $m-1$ polynomial vanishing on $\alpha_1,\dots, \alpha_{j-1}, \alpha_{j+1},\dots , \alpha_m$. Then for each $i$ we can find, by Lagrange interpolation again, a polynomial $e_i$ of degree $\leq m-1$ such that $ e_i (\alpha_j) = c_j/(f_i(\alpha_j )\lambda_j)$. 

Now $\frac{d}{dt} f_i(\alpha_j, t)$ can be an arbitrary polynomial of degree $< d_i$. It follows that for all polynomials $\tilde{f}$ of degree $<d_i$, 
\[ 0 =  \sum_{j=1}^m c_j    \frac{ \tilde{f} (\alpha_j) }{ f_i(\alpha_j)} =\sum_{j=1}^m \lambda_j   \tilde{f}(\alpha_j) e_i (\alpha_j) .\]
It follows that if $ \deg (\tilde{f}) + \deg (e_i)  = \deg (\tilde{f}e_i)\leq m-1$, then the coefficient of $T^{m-1}$  in $\tilde{f}e_i$ vanishes, so $\deg(\tilde{f}) + \deg(e_i) = \deg(\tilde{f}) + \deg(e_i) \leq m-2$. Applying this for $\tilde{f} =1$, we show that $\deg(e_i)\leq m-2$, so we can apply it for $\tilde{f} =T$, showing $\deg (e_i) \leq m-3$, and so on, until we apply it for $  T^{d_i-1}$, and show $\deg(e_i) \leq m-1-d_i$. 

Now $f_i e_i$ has degree $\leq m-1$, so is determined by its values at $\alpha_j$, which are $c_j/\lambda_j$ and in particular are independent of $i$. Hence $f_i e_i$ is equal to a fixed polynomial $h$ of degree $m-1$ for all $i$. Thus all the $f_i$ divide $h$, as desired.
%
%
%
%
%

\end{proof}

However, we do not know how to apply the vanishing cycles method, or other similar methods, to $X_{(d_i)_{i=1}^{2k} , (\alpha_i)_{i=1}^m}$ for all values of $d_1,\dots, d_{2k}$. The reason is that the singularities are too big. For example:

\begin{lemma} Assume that $d_i = d_{k+i}$ for all $i$ from $1$ to $k$. Then the dimension of the singular locus of $X_{(d_i)_{i=1}^{2k} , (\alpha_i)_{i=1}^m}$ is at least $\min(d_1 + \dots + d_k, m-1)$. \end{lemma}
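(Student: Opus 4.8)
\emph{Plan.} The idea is to write down, very explicitly, an $r$-dimensional irreducible family of singular points, where $r=\min(d_1+\dots+d_k,m-1)$, and to invoke Lemma~\ref{singularity-characterization} to see that the members of this family really are singular. Accordingly we work under the hypothesis $\sum_{i=1}^{2k}d_i\ge m$ of Lemma~\ref{singularity-characterization} (which, since $d_{k+i}=d_i$, reads $2(d_1+\dots+d_k)\ge m$) and under the running assumption $d_i\le m-1$ for all $i$. Put $D=d_1+\dots+d_k$ and $r=\min(D,m-1)$, and note that $d_i\le D$ and $d_i\le m-1$ together give $d_i\le r$ for every $i$.

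The first ingredient is purely combinatorial: since $\sum_{i=1}^k d_i=D\ge r$ while each $d_i\le r$, one can choose subsets $I_1,\dots,I_k\subseteq\{1,\dots,r\}$ with $|I_i|=d_i$ and $I_1\cup\dots\cup I_k=\{1,\dots,r\}$. (Greedily: run through $i=1,\dots,k$, and for each $i$ use as many not-yet-covered indices as $d_i$ allows; the total budget $D\ge r$ guarantees everything gets covered.) Fix one such choice of the $I_i$ once and for all.

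Now let $U\subseteq\mathbb C^r$ be the Zariski-open dense set of tuples $\beta=(\beta_1,\dots,\beta_r)$ of pairwise distinct complex numbers none of which equals any $\alpha_j$. For $\beta\in U$ set $f_i=f_i(\beta):=\prod_{t\in I_i}(T-\beta_t)$ for $1\le i\le k$, and consider the diagonal tuple $(f_1,\dots,f_k,f_1,\dots,f_k)$. Then $\deg f_i=d_i$, each $f_i(\alpha_j)\ne 0$, and the equations $\prod_{i=1}^k f_i(\alpha_j)=\prod_{i=k+1}^{2k}f_i(\alpha_j)$ hold because the two sides are literally equal; hence this tuple lies in $X_{(d_i)_{i=1}^{2k},(\alpha_i)_{i=1}^m}$. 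Moreover, since the $\beta_t$ are distinct and $\bigcup_i I_i=\{1,\dots,r\}$, we have $\operatorname{lcm}(f_1,\dots,f_k)=\prod_{t=1}^r(T-\beta_t)$, a polynomial of degree $r\le m-1<m$ that is divisible by every one of the $2k$ polynomials $f_1,\dots,f_k,f_1,\dots,f_k$. By Lemma~\ref{singularity-characterization} the tuple is therefore a singular point of $X_{(d_i)_{i=1}^{2k},(\alpha_i)_{i=1}^m}$.

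Finally, the dimension count. The assignment $\beta\mapsto(f_1(\beta),\dots,f_k(\beta),f_1(\beta),\dots,f_k(\beta))$ is a morphism from $U$ into the singular locus, and it has finite fibers: from the image tuple one recovers $\prod_{t=1}^r(T-\beta_t)$ as the least common multiple of its first $k$ entries, hence recovers the set $\{\beta_1,\dots,\beta_r\}$, so there are at most $r!$ preimages. Thus the image is a subvariety of the singular locus of dimension $\dim U=r=\min(d_1+\dots+d_k,m-1)$, which is the bound claimed. The substantive input is Lemma~\ref{singularity-characterization}; everything else is elementary, and the only two points that need any thought are the combinatorial claim that the covering subsets $I_i$ exist (which uses exactly $\sum d_i\ge r$ and $d_i\le r$) and the finite-fiber argument that pins the dimension of the family at $r$ rather than something smaller. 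The guiding principle throughout is that one should work on the diagonal $\{f_{k+i}=f_i\}$, where the defining congruences hold automatically, and arrange that all the $f_i$ share enough common linear factors that $\operatorname{lcm}(f_1,\dots,f_k)$ stays in degree $<m$ — this is precisely what makes these diagonal points singular, while the freedom to move the $r$ shared roots is what supplies the $r$ dimensions.
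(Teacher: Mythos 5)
Your proof is correct and follows essentially the same route as the paper's: both work on the diagonal $f_{k+i}=f_i$, produce a family of singular points by letting all $f_i$ divide a common polynomial $h$ of degree $\min(d_1+\dots+d_k,\,m-1)$, and cite Lemma~\ref{singularity-characterization} to conclude singularity. You are merely more explicit where the paper is terse — you parameterize by the roots $\beta_t$ and give the greedy combinatorial choice of the index sets $I_i$, you spell out why the map from $\beta$ to the tuple has finite fibers (the paper just asserts "distinct $h$ give distinct tuples"), and you flag the implicit hypotheses $\sum_i d_i\ge m$ (needed to invoke Lemma~\ref{singularity-characterization}) and $d_i\le m-1$ (needed so that $d_i\le r$, i.e.\ so divisors of $h$ of degree $d_i$ exist), both of which the paper's proof also relies on without stating. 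None of these add-ons change the argument; they just make its validity easier to audit.
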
 

\begin{proof} For any monic polynomial $h$ of degree $m-1$, we can choose $f_i$ to be an arbitrary divisor of this polynomial with degree $d_i$ and $f_{k+i}$ to equal $f_i$, so that the equation $\prod_{i=1}^k f_i(\alpha_j) =\prod_{i=k+1}^{2k} f_i (\alpha_j)$ is automatically satisfied. This point will be singular by Lemma \ref{singularity-characterization}.  If $d_1 + \dots + d_k \geq m-1$, then we can choose $f_i$ to cover all the roots of $h$. Having done this, we will obtain distinct tuples of polynomials $f_1,\dots, f_{2k}$ for distinct $h$, so the dimension of the singular locus is at least the dimension $m-1$ of the space of possible $h$. 

If $d_1 + \dots + d_k < m-1$, we can do the same thing with $h$ of degree $d_1 + \dots + d_k $. \end{proof} 

This dimension is large enough that we don't obtain any estimates for the moments stronger than what can be obtained directly from the much easier Theorem \ref{weil}, which implies $|L(\chi,u)| \leq (1 + |u| \sqrt{q})^{m-1}$ and so the average of $|L(\chi,u)|^{2k}$ is at most $ (1+ |u|\sqrt{q})^{ 2k(m-1)}$.

However when $d_1,\dots,d_k$ is large and $d_{k+1},\dots, d_{2k}$ are small, or vice versa, the problem disappears. In fact, in the extreme case, the singularities are isolated.

\begin{lemma}\label{singular-finitely-many} Suppose $d_{k+1} = \dots = d_{2k}= 0 $. Then the singular locus of $X_{(d_i)_{i=1}^{2k} , (\alpha_i)_{i=1}^m}$ consists of finitely many points. \end{lemma}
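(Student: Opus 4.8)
The plan is to apply Lemma~\ref{singularity-characterization} to convert the singularity condition into a divisibility condition, then to push the problem down to the single polynomial $F = \prod_{i=1}^k f_i$ and to rule out positive-dimensional families of singular points by a tangent-space computation built on Lagrange interpolation. First I would record that, since $d_{k+1} = \dots = d_{2k} = 0$, each $f_i$ with $i > k$ equals $1$, so a point of $X_{(d_i)_{i=1}^{2k},(\alpha_i)_{i=1}^m}(\mathbb C)$ is a tuple $(f_1,\dots,f_k)$ of monic polynomials with $\deg f_i = d_i$ and $\prod_{i=1}^k f_i(\alpha_j) = 1$ for all $j$; equivalently $g \mid F - 1$, where $F := \prod_{i=1}^k f_i$ is monic of degree $D := \sum_{i=1}^k d_i$. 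When $D < m$ this forces $F$ to be a monic polynomial of degree $D$ with value $1$ at the $m > D$ distinct points $\alpha_j$, which is impossible unless $D = 0$; so in that case $X_{(d_i)_{i=1}^{2k},(\alpha_i)_{i=1}^m}$ is empty or a single point and there is nothing to do. I would then assume $D \geq m$, so that Lemma~\ref{singularity-characterization} applies.

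By that lemma --- using that the constant polynomials $f_{k+1},\dots,f_{2k}$ divide everything --- the point $(f_1,\dots,f_k)$ is singular exactly when $\operatorname{lcm}(f_1,\dots,f_k)$ has degree $\leq m-1$, in which case $\operatorname{rad}(F) = \operatorname{rad}(\operatorname{lcm}(f_i))$ has degree $\leq m-1$, so $F$ has at most $m-1$ distinct roots while still satisfying $g \mid F - 1$. Writing $\mu$ for the multiplication map $(f_1,\dots,f_k) \mapsto \prod_i f_i$, letting $U$ be the set of monic polynomials of degree $D$ with at most $m-1$ distinct roots, and letting $V$ be the set of monic polynomials of degree $D$ with $g \mid F - 1$, this shows that the singular locus is contained in $\mu^{-1}(U \cap V)$. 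Since $\mu$ has finite fibers (a polynomial has only finitely many monic divisors), it then suffices to prove that $U \cap V$ is finite.

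To do that I would stratify $U$ by the finitely many multiplicity patterns $(m_1,\dots,m_r)$ with $r \leq m-1$ and $\sum_s m_s = D$: each stratum is the image, under the finite-to-one map $(\beta_1,\dots,\beta_r) \mapsto \prod_{s=1}^r (T - \beta_s)^{m_s}$ on the locus of pairwise distinct $\beta_s$, so it is enough to show that for each pattern the set $Y$ of $r$-tuples $(\beta_1,\dots,\beta_r)$ of pairwise distinct complex numbers with $\prod_{s=1}^r (\alpha_j - \beta_s)^{m_s} = 1$ for $j = 1,\dots,m$ is finite. If $Y$ had a component of positive dimension, I would take a smooth point $(\beta_1,\dots,\beta_r)$ of it --- where necessarily $\beta_s \neq \alpha_j$, the prescribed value $1$ being nonzero --- and a nonzero tangent vector $(v_1,\dots,v_r)$; differentiating the equations and dividing by $\prod_s(\alpha_j - \beta_s)^{m_s} = 1$ yields $\sum_{s=1}^r \frac{m_s v_s}{\alpha_j - \beta_s} = 0$ for every $j$. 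This expression equals $N(\alpha_j)/\prod_s(\alpha_j-\beta_s)$ with $N := \sum_s m_s v_s \prod_{s' \neq s}(T - \beta_{s'})$ of degree $\leq r-1 \leq m-2$, so $N$ vanishes at the $m$ distinct points $\alpha_1,\dots,\alpha_m$ and is identically zero; uniqueness of the partial fraction expansion at the distinct poles $\beta_s$ then forces $m_s v_s = 0$, hence $(v_1,\dots,v_r) = 0$, a contradiction. So $U \cap V$, and therefore the singular locus, is finite.

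The hard part is this last step: a priori there is no reason the singular locus should be finite, since $X_{(d_i)_{i=1}^{2k},(\alpha_i)_{i=1}^m}$ itself has dimension $D - m$, which is unbounded. What makes it work is that the singular locus maps with finite fibers into $U \cap V$ --- the intersection of an $(m-1)$-dimensional family of polynomials with a codimension-$m$ affine space --- and the interpolation computation shows this intersection has vanishing tangent space at every point. Everything else (the reduction through $\mu$, the low-degree cases) is formal.
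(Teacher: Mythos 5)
Your proof is correct and takes essentially the same route as the paper: reduce via Lemma~\ref{singularity-characterization} to a system of the form $\prod_s (\alpha_j - \beta_s)^{m_s} = 1$ ($j = 1,\dots,m$), then show its Jacobian has full rank because a rational function $\sum_s c_s/(T - \beta_s)$ with fewer than $m$ poles, vanishing at infinity and at all $m$ distinct points $\alpha_j$, must be identically zero, which forces each residue $m_s v_s$ to vanish. Your version is somewhat more careful in the bookkeeping: you handle the case $\sum_{i=1}^{k} d_i < m$ explicitly (where Lemma~\ref{singularity-characterization} does not apply and the space is at most a single point, a case the paper silently skips), and by passing to $F = \prod_i f_i$ and stratifying by its multiplicity pattern on the locus of pairwise distinct roots $\beta_s$, you sidestep the awkwardness in the paper's setup when the auxiliary polynomial $h$ has repeated roots and its list of roots $\beta_{j'}$ is not actually a set of distinct points.
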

\begin{proof} Let $f_1,\dots,f_{2k}$ be a singular point. We apply Lemma \ref{singularity-characterization} to conclude that there exists an $h$ which all the $f_i$ divide. Let $\beta_1,\dots, \beta_{\deg h}$ be the roots of $h$.

Any monic $f_i$ dividing $h$ must have the form \[ f_i(T)  =\prod_{j'=1}^{\deg h} (T- \beta_{j'})^{e_{i,j'}}\]  for some $e_{i,j'} \in \{0,1\}$, so \[ \prod_{i=1}^k f_i(T) = \prod_{j'=1}^{\deg h} (T- \beta_{j'})^{e_{j'}}\]  for some $e_{j'} \in \mathbb N$. By removing the root $\beta_j$ from $h$ in case $e_j'=0$, we may assume that all the $e_j'$ are positive. There are finitely many possible values for the $e_{i,j'}$, and for each of them, the system of equations
\[  \prod_{j'=1}^{\deg h} (\alpha_1- \beta_{j'})^{e_{j'}} = 1 , \dots,  \prod_{j'=1}^{\deg h} (\alpha_m- \beta_{j'})^{e_{j'}} = 1 \] has finitely many solutions. 

To check this, it suffices to show that the Jacobian of this system of $m$ equations in $\deg h$ variables has full rank. We calculate the Jacobian using logarithmic derivatives.  This gives a matrix $M$ whose entries are $M_{j j'} =  \frac{ -  e_{j'} } { \alpha_j - \beta_{j'}  }$ which cannot have a kernel as, for a vector $(c_{j'})$ in the kernel, $\sum_{j'=1}^{\deg h} \frac{ - e_{j'} c_{j'}}{ T -\beta_{j'}}$ would be a nonzero rational function with $\deg h$ poles, each of multiplicity one, and $m$ zeroes, vanishing at $\infty$, which is impossible as $\deg h<m$.\end{proof}

Because the singularities are isolated, applying the vanishing cycles method to control the singularities of $X_{(d_i)_{i=1}^{2k}, (\alpha_i)_{i=1}^{2k}}$ in the case $d_{k+1}=\dots =d_{2k}=0$ looks promising. However, we are far from done with the problem at this point. To apply the method, we also need to compactify the variety, understand the singularities and vanishing cycles at the points at infinity of this compactification,  bound the dimension of the vanishing cycles sheaf at each isolated singularity, and calculate the cohomology of the generic fiber.

All of these steps were carried out for a slight variant of this problem in \cite{square-root}. In this variant, rather than demanding $\prod_{i=1}^k f_i \equiv 1$ mod $g$, we demand the coefficients of $T^{d_1+ \dots + d_k-1},\dots T^{d_1+\dots + d_k-m}$ in $\prod_{i=1}^k f_i$ vanish. This variant is almost the same as the case when $g= T^m$, using the change of variables $T \mapsto T^{-1}$ that reverses the order of the coefficients of a polynomial, but because monicity is not preserved by this change of variables it ends up relating to the moments of only those $\chi$ modulo $g$ with $\chi(\mathbb F_q^\times)=1$ (i.e. the even characters). 

 This vanishing cycles argument in \cite{square-root} concludes in showing that $H^i_c( X_{ (d_i)_{i=1}^{2k} }, \mathbb Q)=0$ for all \[ i \in ( d+1, 2 d) \] where \[ d= \left(\sum_{i=1}^{k} d_i \right) - m = \dim X_{ (d_i)_{i=1}^{2k} },\] that $H^{2d}_c( X_{ (d_i)_{i=1}^{2k} }, \mathbb Q)$ is one-dimensional, and that the dimensions of the cohomology groups in degrees $\leq d+1$ are bounded by $3 (2k+2 )^{ 2m+d }$. Similar results, with a slightly weaker cohomology vanishing statement, were proved for $\ell$-adic cohomology in characteristic $p$.

Obtaining these results requires studying a compactification. A good choice involves embedding $X_{ (d_i)_{i=1}^{2k} }$ into the weighted projective space where the $j$th coefficient of a polynomial $f_i$ is a coordinate of degree $d_i - j$. This makes the coefficient of $T^{d_1+\dots + d_k-r}$ homogeneous of degree $k$, so the equations defining $X_{ (d_i)_{i=1}^{2k}}$ are each homogeneous. (Alternatively, we can reduce to the case where each $d_i=1$ by factoring each $f_i$ into linear factors. In this case, our compactification embeds into an unweighted projective space.) A variant of Lemma \ref{singular-finitely-many} works in this setting, showing that the singularities of the compactification are isolated. (In characteristic $p$, we can only give an upper bound on the dimension of the singular locus, which improves as $p$ grows). We can apply vanishing cycles in a family where we vary the defining equations of $X_{(d_i)_{i=1}^{2k}}$ to the equations defining a smooth affine complete intersection. The analogous vanishing result for the cohomology of the affine complete intersection is well-known. Because the singularities are isolated, the vanishing cycles sheaf is supported at finitely many points, so its cohomology satisfies a similar vanishing result. By applying the vanishing cycles long exact sequence, we can conclude that $H^i_c( X_{ (d_i)_{i=1}^{2k} }, \mathbb Q)=0$ for $d+1< i < 2d$ and is one-dimensional if $i=2d$.

Finally, the bounds for the dimension of the cohomology groups use a separate method of Katz which proves bounds for the cohomology groups of a variety in terms of the number of variables, number of defining equations, and degrees of defining equations \cite{katzbetti}.

By proving these results on cohomology, \cite{square-root} was able to obtain estimates for sums of divisors functions, as well as a variant of Question \ref{moments-question} where the $L$-function is not  encased in an absolute value, i.e. an estimate for  \[ \frac{1}{ | (\mathbb F_q[T]/g)^\times | } \sum_{ \chi: (\mathbb F_q[T]/g)^{\times} \to \mathbb C^{\times}} L (\chi,u) ^{k} .\] To see why this estimate was obtained, note that following the arguments of \S\ref{ss-lf-bm} without the absolute value, we obtain the counting function in the special case where $d_{k+1}= \dots = d_{2k}=0$. 

It is likely possible to carry out a similar argument for values of $g$ other than $T^m$. In the case when $g$ has distinct roots, this will be done in \cite{square-free}. Here a suitable compactification embeds into a product of projective spaces, where the coefficients of each $f_i$ are the coordinates of a different projective space.  We can apply vanishing cycles in the family varying $c_j$ discussed above. In this case, the compactification adds new singularities, but their structure is so simple that we can explicitly check there are no vanishing cycles at these singular points. Here the cohomology of the generic fiber is much subtler to compute and requires a careful analysis of the compactification and the theory of perverse sheaves.  The final result is a calculation of the compactly supported cohomology of $X_{ (d_i)_{i=1}^{2k} ,(\alpha_i)_{i=1}^{m}}$ in all degrees but $\dim X_{ (d_i)_{i=1}^{2k} ,(\alpha_i)_{i=1}^{m}}$ and $\dim X_{ (d_i)_{i=1}^{2k} ,(\alpha_i)_{i=1}^{m}} +1$, and bounds on the dimensions of cohomology in those degrees (still in the case $d_{k+1}=\dots = d_{2k}=0$.) This again leads to estimates for moments without the absolute value, this time for characters $\chi$ modulo a squarefree polynomial $g \in \mathbb F_q[T]$.

\subsection{Future work} 

It would be of great interest to generalize the vanishing cycles techniques to general polynomials $g$ which are neither squarefree not a power of $T$, or to handle the case when $d_i$ is large for both $1\leq i \leq k$ and $k+1 \leq i \leq 2k$, even for very specific  $g$.  For general $g$, the main difficulty is finding a good compactification, but to handle the case where all $d_i$ are large, more ideas than that are necessary.

\section{Short character sums}\label{s-cs}

Many problems in analytic number theory involve, at a crucial point, short character sums. Let $M$ be a natural number and let $\chi: (\mathbb Z/M)^{\times} \to \mathbb C^\times$ be a homomorphism.  Extend $\chi$ to a function on $\mathbb Z$ by setting $\chi(n)=0$ if $\gcd(n,m) \neq 1$ (and therefore $n$ is not an invertible element of $(\mathbb Z/M \mathbb Z)^{\times}$). Then a short sum of $\chi$ has the form   \begin{equation}\label{Z-character-sum} \sum_{n=a}^{a+N-1}  \chi(n) \end{equation} for $a \in \mathbb Z$ and $N < M$.  Because $(\mathbb Z/M \mathbb Z)^{\times}$ is a finite group, $\chi$ takes values in complex numbers of modulus $1$. Because each term has modulus at most $1$, the size of this sum is at most the total number of terms, which is $N$. Thus the \emph{trivial bound} for this sum is \[ \left|  \sum_{n=a}^{a+N-1}  \chi(n)\right| \leq N. \]   
If $\chi$ is the trivial character, this bound is essentially sharp, but for any other $\chi$, the trivial bound can often be improved. The most important improvements are the P\'olya-Vinogradov inequality \cite{Polya,Vinogradov} \[ \left|  \sum_{n=a}^{a+N-1}  \chi(n)\right| \leq \sqrt{M} \log M ,\]  which improves on the trivial bound for $N > \sqrt{M} \log M $, and the Burgess bound \cite{Burgess}, which is more complicated to state, but  improves on the trivial bound whenever $N> M^{1/4+ \epsilon}$ for any fixed $\epsilon>0$. Bounds which improve on the trivial bound for a greater range of possible $N$, or bounds which improve on it to a greater extent, would have numerous applications in analytic number theory. 

It turns out that this problem has a very natural topological analogue, at least in the case when the modulus $M$ is squarefree, and this analogue can be almost completely solved. However, the translation between arithmetic and topology is nontrivial. Thus, we will explain the topological problem first, then explain the relationship with character sums, and finally explain what makes it solvable.

Let $n$ and $m$ be natural numbers. Let $\alpha_1,\dots, \alpha_m \in \mathbb C$ be a tuple of distinct complex numbers, and let $\beta_1,\dots, \beta_m$ be another tuple of complex numbers, not necessarily distinct. Let $X_{(\alpha_i)_{i=1}^m , (\beta_i)_{i=1}^m}$ be the moduli space of polynomials $h$ of degree $<n$ such that $h(\alpha_i) \neq \beta_i$ for $i$ from $1$ to $m$.

Note that the space of polynomials $h$ of degree $<n$ is simply $\mathbb C^n$, as we have one coordinate for each coefficient of the polynomial. Using these coordinates, $h(\alpha_i)$ is a linear function on $\mathbb C^n$, so the set of $h$ such that $h(\alpha_i) = \beta_i$ is a hyperplane in $\mathbb C^n$, and thus $X_{(\alpha_i)_{i=1}^m , (\beta_i)_{i=1}^m}$ is the complement of $m$ hyperplanes in $\mathbb C^n$. So our space under consideration is a special type of hyperplane complement.

\begin{question} What is \[ H^* (X_{(\alpha_i)_{i=1}^m , (\beta_i)_{i=1}^m}, V) \] where $V$ is a nontrivial one-dimensional representation of $\pi_1 ( X_{(\alpha_i)_{i=1}^m , (\beta_i)_{i=1}^m})$?  \end{question}

In other words, we consider, not the usual cohomology, but cohomology twisted by some local system of rank $1$ on our space. Other than that, the situation is a standard one for arithmetic topology - our goal is to show vanishing of the high-degree compactly supported cohomology groups, or, equivalently (because Poincar\'{e} duality is valid in this setting), of the low-degree usual cohomology groups, as well as some reasonable bound for the dimensions of the middle groups.

\subsection{The relation between topology and arithmetic}

The first step in the translation to topology is to develop $\mathbb F_q[T]$-analogues of every element of our arithmetic problem. We can replace $M$ with a monic polynomial $g \in \mathbb F_q[T]$. Let $m$ be the degree of $g$.

We can then define $\chi$ as a homomorphism $( \mathbb F_q[T] / g  )^\times \to \mathbb C^\times$, and extend it by zero to non-invertible elements.

To find the appropriate analogue of an interval, we observe that the set of all $h \in \mathbb F_q[T]$ with $\deg h < n$  behaves like an interval around $0$, in particular because it contains all polynomials whose norm $|h| = q^{ \deg h}$ is small, and thus the set of polynomials $f+h$, where $f$ is fixed and $h$ varies over polynomials with $\deg h < n$, behaves like an interval around $f$. Thus, the function field analogue of \eqref{Z-character-sum} is \begin{equation}\label{q-character-sum} \sum_{ \substack{ h\in \mathbb F_q[T] \\ \deg h < n}} \chi(f+h) .\end{equation}   However, it is equivalent, and will be more convenient later, remove the terms where $\chi$ is set to $0$ from the sum, obtaining \[ \sum_{ \substack{ h\in \mathbb F_q[T] \\ \deg h < n \\ \gcd(f+h, g) = 1 }} \chi(f+h) .\] This will be convenient as it puts more of the complexity into the set we are summing over rather than the function being summed, and the set is easier to interpret geometrically. 

For the trivial bound, we can observe that there are $q^n$ possible $h$ with $\deg h<n$, as each coefficient can take $q$ values, so we have \[ \Bigl|  \sum_{ \substack{ h\in \mathbb F_q[T] \\ \deg h < n \\ \gcd(f+h, g) = 1 }} \chi(f+h)  \Bigr| \leq q^n.\] Again, the key problem is to substantially improve the trivial bound when $\chi$ is not the constant function $1$.

To do this, we let $X_{g,f}(K)$ for a monic polynomial $g$ and a polynomial $f$, both over a field $K$, be the set of polynomials $h$ over $K$ with $\deg h<n$ and $\gcd(f+h, g)=1$.

Over the complex numbers, we can factor $g = \prod_{i=1}^m (T-\alpha_i)$. In this case, $\gcd(f+h, g)\neq 1$ if and only if some factor $(T-\alpha_i)$ divides $f+h$, which happens exactly when $h(\alpha_i) + f(\alpha_i) =0$, or $h(\alpha_i) = -f(\alpha_i)$. Thus, setting $\beta_i = - f(\alpha_i)$, the set of polynomials $h$ with $\gcd(f+h, g) = 1$ is exactly the set where $h(\alpha_i) \neq \beta_i$ for all $i$.

Hence $X_{(\alpha_i)_{i=1}^m , (\beta_i)_{i=1}^m}(\mathbb C) = X_{g, f}(\mathbb C) $ is the topological analogue of the set $X_{g,f} (\mathbb F_q)$ we sum over in \eqref{q-character-sum}. We have thus translated every aspect of our arithmetic problem into topology, except for the character $\chi$. Why should the analogue of summing the character $\chi$ be taking cohomology with coefficients in a representation $V$ of the fundamental group? The answer comes from the twisted Grothendieck-Lefschetz formula (\cite[Rapport, Theorem 3.2]{sga4h} or \cite[Theorem 10.5.1]{Fu}).

For a space $X$ over a finite field $\mathbb F_q$, the usual Grothendieck-Lefschetz formula expresses $|X(\mathbb F_q)|$ in terms of the trace of a Frobenius element $\Frob_q$ on the compactly-supported cohomology $H^*_c( X_{\overline{\mathbb F}_q}, \mathbb Q_\ell)$. The twisted Grothendieck-Lefschetz formula does the same thing for the cohomology $H^*_c( X_{\overline{\mathbb F}_q},V)$, where $V$ is a representation of the fundamental group of $X$. Because we are working in the finite field setting, it is necessary that we use the \'{e}tale fundamental group $\pi_1^{\textrm{et}} (X)$. So let us take $V$ to be a representation of $\pi_1^{\textrm{et}} (X)$ over $\mathbb Q_\ell$.

For us, the crucial property of $\pi_1^{\textrm{et}}(X)$ is that, for any covering space $\pi: Y\to X$ (technically, a finite \'{e}tale map), and any $x \in X(\mathbb F_q)$, $\pi_1^{\textrm{et}}(X)$ acts on the $\overline{\mathbb F}_q$-points of the fiber $\pi^{-1}(x)$ of $\pi$ over $x$. Furthermore, the isomorphism class of this fiber, as a set with a $\pi_1$-action, is independent of $x$. This is the analogue of the classical monodromy action on the fiber and its invariance under change of fiber. (More carefully, we need to pick a base point for the fundamental group, and then the action of the fundamental group on the fiber over another base point is well-defined only up to conjugacy, but our actions will factor through abelian groups so conjugacy is trivial and thus this detail is irrelevant.)

It turns out there are special conjugacy classes $\Frob_{q,x} \in \pi_1^{\textrm{et}}(X)$, for each $x \in X(\mathbb F_q)$, which are characterized by the property that the action of $\Frob_{q,x}$ on a point $y\in \pi^{-1}(x)$ is the same as the action of $\Frob_q$ - in other words is the same as raising all coordinates of $y$ to their $q$th power. The twisted Grothendieck-Lefschetz fixed point formula states 
\[ \sum_{x \in X(\mathbb F_q)} \tr( \Frob_{q,x}, V)  = \sum_{i=0}^{2\dim X} (-1)^i \tr( \Frob_q,  H^i_c( X_{\overline{\mathbb F}_q},V)).\]
 
Thus, because we know that $X_{g,f, \mathbb F_q}( \mathbb F_q) $ is exactly the set of $h \in \mathbb F_q[T], \deg h< n, \gcd(f+h, g)=1$ we wish to sum over, to interpret \eqref{q-character-sum} geometrically, it suffices to find a representation $V$ of $\pi_1^{\textrm{et}} ( X_{g,f, \mathbb F_q})$ such that $\tr( \Frob_{q,h}, V)=\chi(f+h)$.

To find such a representation, we use a strategy due to Lang \cite[\S3]{Lang}.

Let $Y_{g,f}$ be the space with coordinates $a_0,\dots, a_{n-1}$, which we view as the coefficients of a polynomial $ \sum_{i=0}^{m-1} a_i T^i $, and $h_0,\dots, h_{n-1}$, which we view as coefficients of a polynomial $\sum_{i=0}^{n-1} h_i T^i$, subject to the conditions
\[ \gcd\left( \sum_{i=0}^{m-1} a_i T^i  ,g\right) =1, \gcd \left( f+ \sum_i h_i T^i , g\right)=1,\]
and
\begin{equation}\label{magic-Lang-formula} \sum_{i=0}^{m-1} a_i^q  T^i \equiv \left( \sum_{i=0}^{m-1} a_i T^i  \right) \left(f + \sum_{i=0}^{n-1}  h_i T^i \right) \mod g  .\end{equation}
In particular, the important equation is the last one, because this ``forces" $\Frob_q$ to act by multiplication by $f+h$. More precisely, this is a system of $m$ equations, one for each coefficient mod $g$.

Let $\pi: Y_{g,f} \to X_{g,f}$ be the map that forgets $a$ and remembers $h$. Lang \cite{Lang} proved the key facts:

\begin{theo}\label{thm-Lang} \begin{enumerate}

\item $\pi$ is a finite \'{e}tale map, so $Y_{g,f}$ is a covering space of $X_{g,f}$.

\item $(\mathbb F_q[T]/g)^\times$ acts on $Y_{g,f}$ by fixing $h$ and acting by multiplication modulo $g$ on the polynomial $ \sum_{i=0}^{m-1} a_i T^i$.

\item This action is simply transitive on the fiber of $\pi$ over any point of $X_{g,f}$ defined over an algebraically closed field.

\item The action of each element of $\pi_1^{\operatorname{et}} ( X_{g,f})$ on a fiber of $\pi$ is equal to the action of some unique element of  $(\mathbb F_q[T]/g)^\times$, defining a homomorphism $\pi_1^{\operatorname{et}} (X_{g,f} ) \to (\mathbb F_q[T]/g)^\times$.

\item The image of $\Frob_{q,h}$ under this homomorphism, for a point $h \in X_{g,f}(\mathbb F_q)$, is $f+h$. \end{enumerate}

\end{theo}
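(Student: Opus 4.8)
The plan is to recognize $Y_{g,f}$ as a \emph{Lang torsor} and then read off all five claims from standard torsor formalism. Set $R = \mathbb F_q[T]/g$, an $m$-dimensional $\mathbb F_q$-algebra, and let $G$ be the algebraic group over $\mathbb F_q$ with $G(S) = (R\otimes_{\mathbb F_q}S)^\times$ for an $\mathbb F_q$-algebra $S$; concretely $G$ is the open subscheme of $\mathbb A^m = \operatorname{Spec}\mathbb F_q[a_0,\dots,a_{m-1}]$ where $\sum_i a_iT^i$ is a unit mod $g$, with multiplication of polynomials modulo $g$ as the group law. The first thing I would verify is that $G$ is connected: after base change to $\overline{\mathbb F}_q$ and factoring $g=\prod_j(T-\alpha_j)^{e_j}$ we get $R\otimes\overline{\mathbb F}_q\cong\prod_j\overline{\mathbb F}_q[T]/(T-\alpha_j)^{e_j}$, and the unit group of each local factor is $\cong\mathbb G_m\times\mathbb G_a^{e_j-1}$, so $G_{\overline{\mathbb F}_q}$ is a product of connected groups. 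Let $\Frob\colon A=\sum_ia_iT^i\mapsto\sum_ia_i^qT^i$ be the associated $\mathbb F_q$-group endomorphism of $G$, let $L\colon G\to G$ be the Lang map $L(A)=A^{-1}\Frob(A)$, and note that $\phi\colon X_{g,f}\to G$, $h\mapsto(f+\sum_ih_iT^i \bmod g)$, is a well-defined morphism precisely because $\gcd(f+h,g)=1$ on $X_{g,f}$. Unwinding \eqref{magic-Lang-formula}, it says exactly $L(A)=\phi(h)$, so $Y_{g,f}=X_{g,f}\times_{\phi,\,G,\,L}G$.

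Next I would invoke Lang's theorem for the connected group $G$: $L$ is a finite \'etale surjection and exhibits $G$ as a torsor under the finite abelian group $G(\mathbb F_q)=R^\times=(\mathbb F_q[T]/g)^\times$ acting by translation. Surjectivity on $\overline{\mathbb F}_q$-points is the substantive input and is where connectedness is used; \'etaleness is formal, since $d(a\mapsto a^q)=0$ in characteristic $p$ forces the linearization of $L$ to be $\delta A\mapsto-(\delta A)\cdot\phi(h)$, an isomorphism because $\phi(h)$ is a unit. Torsors pull back to torsors, so $\pi\colon Y_{g,f}\to X_{g,f}$ is finite \'etale and a $(\mathbb F_q[T]/g)^\times$-torsor, which is (1). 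For (2), the translation action is $c\cdot(A,h)=(cA,h)$, and since $L(cA)=c^{-1}\Frob(c)\,L(A)$ this preserves $L(A)=\phi(h)$ exactly when $\Frob(c)=c$, i.e.\ $c\in(\mathbb F_q[T]/g)^\times$. Statement (3) is the torsor property on geometric fibers: the action is free because $A$ is invertible; any two solutions $A_1,A_2$ of $L(A)=\phi(h)$ satisfy $\Frob(A_1A_2^{-1})=A_1A_2^{-1}$, hence differ by an element of $(\mathbb F_q[T]/g)^\times$; and the fibers are nonempty by surjectivity of $L$.

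Statement (4) is then general formalism: a torsor under a finite group $\Gamma$ over a connected scheme with a chosen geometric point is classified by a continuous homomorphism $\pi_1^{\operatorname{et}}(X_{g,f})\to\Gamma$, and the monodromy action on any fiber is induced from this homomorphism by translation; with $\Gamma=(\mathbb F_q[T]/g)^\times$ abelian there is no conjugacy ambiguity. For (5), given $h\in X_{g,f}(\mathbb F_q)$ we have $\phi(h)=f+h\in(\mathbb F_q[T]/g)^\times=G(\mathbb F_q)$, so the geometric fiber $\pi^{-1}(h)$ is $L^{-1}(f+h)\subseteq G(\overline{\mathbb F}_q)$. The geometric Frobenius sends a point $A$ of this fiber to $\Frob(A)$, and $L(A)=f+h$ gives $\Frob(A)=A(f+h)=(f+h)A$ by commutativity of $G$; thus Frobenius acts on the fiber as translation by $f+h$, which is the deck transformation attached to $f+h\in(\mathbb F_q[T]/g)^\times$ via (4), so $\Frob_{q,h}\mapsto f+h$.

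The only step with genuine content is the appeal to Lang's theorem; everything else is bookkeeping with the torsor dictionary. Within that appeal, the point requiring care is the connectedness of $G$, since Lang's theorem is false for disconnected groups. A mild additional subtlety worth a line is the finiteness (not merely \'etaleness) of $\pi$, which one needs in order to treat $Y_{g,f}$ as a covering space: it follows because the defining equations express each $a_i^q$ as an $\mathcal O(X_{g,f})$-linear combination of $1,a_0,\dots,a_{m-1}$, so $\mathcal O(Y_{g,f})$ is generated as an $\mathcal O(X_{g,f})$-module by the monomials $\prod_ia_i^{e_i}$ with every $e_i<q$.
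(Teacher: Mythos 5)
Your proof is correct, and it takes a genuinely cleaner route than the paper's sketch. The paper cites Lang and then verifies the five claims by hand, flagging only the ``key point'' for each (for (1) that $d(a^q)=0$, for (2)--(3) that the fixed points of $a\mapsto a^q$ are exactly $\mathbb F_q$, for (4) that commutants of a simply transitive abelian action come from the group itself, for (5) that the shape of \eqref{magic-Lang-formula} is rigged precisely so that the Frobenius action is multiplication by $f+h$). You instead package the entire construction as the pullback along $\phi\colon X_{g,f}\to G$, $h\mapsto f+h \bmod g$, of the Lang isogeny $L(A)=A^{-1}\Frob(A)$ on the Weil restriction $G=\mathrm{Res}_{R/\mathbb F_q}\mathbb G_m$, and then read all five parts off the torsor dictionary. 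The underlying computations are the same ones the paper points at, but your version is more complete in two respects the paper's sketch elides: you verify connectedness of $G_{\overline{\mathbb F}_q}$ (as a product of $\mathbb G_m\times\mathbb G_a^{e_j-1}$ factors), which is exactly what Lang's theorem needs and can fail for disconnected groups; and via surjectivity of $L$ you account for nonemptiness of the fibers, which is implicit in ``simply transitive'' in (3) but not actually addressed in the paper's sketch (dividing two solutions only gives transitivity once you have at least one solution). One cosmetic remark: your linearization of $L$ at $A$ is really $\delta A\mapsto -A^{-1}(\delta A)L(A)$ rather than $-(\delta A)\phi(h)$, but since $A$ and $\phi(h)$ are both units the conclusion (\'etaleness) is unaffected. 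Your closing remark on finiteness is also fine, though once you know $Y_{g,f}\to X_{g,f}$ is a torsor under the finite constant group $(\mathbb F_q[T]/g)^\times$ it is automatically finite, so the explicit module generators are a belt-and-suspenders check rather than a needed step.
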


Indeed our (1) is \cite[discussion before Theorem 2 on p. 557]{Lang}, (2) and (3) are \cite[(3) on p. 556]{Lang}, and (4) and (5) are the $d=1$ case of \cite[(6) on p. 560]{Lang}, though all of these are expressed in different language.

\begin{proof} There are many things to check, and all can be done straightforwardly, so we just give a sketch, except for the most important ones. 

For (1), the key point is that when differentiating the equation \eqref{magic-Lang-formula} with respect to $a_i$, we can treat the terms $a_i^q$ as constant because the derivative of $a_i^q$ is a multiple of $q$ and thus vanishes.

For (2), the key point is that, because elements of $\mathbb F_q$ are preserved by raising to the $q$th power, when we multiply $\sum_i a_i T^i$ by a polynomial with coefficients in $\mathbb F_q$, we multiply on the left and right side of \eqref{magic-Lang-formula} by the same polynomial, and so \eqref{magic-Lang-formula} is preserved.

For (3), the key point is the reverse - the only elements of any field of characteristic $p$ preserved by raising to the $q$th power are those in $\mathbb F_q$. When we divide two solutions $\sum_i a_i T^i$ of \eqref{magic-Lang-formula}, with the same $h$, by each other, we get a polynomial invariant under raising each coefficient to the $q$th power, and this fact enables us to conclude that its coefficients are in $\mathbb F_q$.

(4) follows from (3) and some naturality properties of the $\pi_1$ action. This action is defined in an isomorphism-invariant way, which means that the action of any element of $\pi_1(X_{g,f})$ commutes with the action of any automorphism of $Y_{g,f}$ fixing $X_{g,f}$. But for a finite abelian group $G$ acting simply transitively on a finite set, the only permutations that commute with the action of $G$ are precisely the elements of $G$.

(5) is the crux of the matter, because it is what allows us to explain why we chose the particular equation \eqref{magic-Lang-formula}, rather than listing the miraculous properties of an equation which seemingly appeared out of nowhere. But it is also straightforward. We have defined the action of $(\mathbb F_q[T]/g)^\times$ as by multiplication on $\sum_i a_i$ modulo $g$. We have also defined $h =\sum_i h_i T^i$ with $h_i \in \mathbb F_q$. Thus, to guarantee that $\Frob_{q,h}$ raising each coordinate to the $q$th power is equivalent to multiplying by $f+h$ modulo $g$, we need \[ \sum_{i=0}^{m-1} a_i^q  T^i \equiv \Bigl( \sum_{i=0}^{m-1} a_i T^i  \Bigr) \Bigl(f + \sum_{i=0}^{n-1}  h_i T^i \Bigr) \mod g \] which is precisely \eqref{magic-Lang-formula}.

 \end{proof}

It follows from Theorem \ref{thm-Lang} that if we compose this homomorphism $\pi^{\textrm{et}}_1(X_{M, f, \mathbb F_q})\to ( \mathbb F_q[T]/ M\mathbb F_q[T])^\times$ with a one-dimensional representation $\chi$ of $( \mathbb F_q[T]/ M\mathbb F_q[T])^\times$, we obtain a one-dimensional representation $V$ of $\pi^{\textrm{et}}_1(X_{M, f, \mathbb F_q})$ such that $\tr( \Frob_{q,h}, V)=\chi(f+h)$, as desired.

The Grothendieck-Lefschetz formula now tells us how to express the sum of $\chi(f+h)$ in terms of the compactly supported cohomology of $V$. Because $X$ is smooth, we may as well consider the usual cohomology of $V^\vee$, which is also a one-dimensional representation.

This explains why the complex analogue has cohomology twisted by a one-dimensional representation.

\begin{remark}  A subtlety here is that the Grothendieck-Lefschetz formula applies over $\mathbb Q_\ell$ and we wish to study exponential sums over $\mathbb C$. To fix this we fix some isomorphism between the algebraic closure of $\mathbb Q_\ell$ and $\mathbb C$, which lets us freely transition between the two of them. To avoid using the axiom of choice, we could instead choose an isomorphism between some sufficiently large subfields of those fields. This technical detail almost never causes trouble and thus should be ignored whenever possible. \end{remark}

\subsection{Topological methods}

Let us now examine the spaces $X_{(\alpha_i)_{i=1}^m , (\beta_i)_{i=1}^m}$ from the perspective of topology. There are multiple methods to studying the twisted cohomology of hyperplane complements that could be applied to $H^* (X_{(\alpha_i)_{i=1}^m , (\beta_i)_{i=1}^m}, V)$ - in the complex setting these include \cite{Nonresonance} and related works giving criteria for nonvanishing outside the middle degree. Here we will focus specifically on the approach by vanishing cycles theory. To make this work, as a starting point one must compactify the space $X_{(\alpha_i)_{i=1}^m , (\beta_i)_{i=1}^m}$. Because $X_{(\alpha_i)_{i=1}^m , (\beta_i)_{i=1}^m}$ is a hyperplane complement in $\mathbb C^n$, a suitable compactification is $\mathbb P^n (\mathbb C)$. Having done this, the singularities we are required to study by vanishing cycles theory are the singularities of the pair $( \mathbb P^n (\mathbb C), X_{(\alpha_i)_{i=1}^m , (\beta_i)_{i=1}^m})$, or, perhaps more clearly, the divisor $\mathbb P^n(\mathbb C) \setminus X_{(\alpha_i)_{i=1}^m , (\beta_i)_{i=1}^m}$, a union of projective hyperplanes.

Let us now now see how the spaces  $X_{(\alpha_i)_{i=1}^m , (\beta_i)_{i=1}^m}$ satisfy the four properties from Desiderata \ref{intro-vc-desiderata} that make vanishing cycles theory a good fit.  

\begin{enumerate}

\item While $X_{(\alpha_i)_{i=1}^m , (\beta_i)_{i=1}^m}$ and its compactification $\mathbb P^n(\mathbb C)$ are both manifolds, the difference $\mathbb P^n(\mathbb C) \setminus X_{(\alpha_i)_{i=1}^m , (\beta_i)_{i=1}^m}$ is not. 

\item However, $\mathbb P^n(\mathbb C) \setminus X_{(\alpha_i)_{i=1}^m , (\beta_i)_{i=1}^m}$ is the solution set of a nice set of equations - linear ones, in fact.

\item $X_{(\alpha_i, (\beta_i)_{i=1}^m}$ has a nice purely topological description as a hyperplane complement, but we do not use this description.

One reason this might be a good idea is the nontrivial difficulty here in bringing a purely topological argument from characteristic zero to characteristic $p$. We have throughout this article mostly ignored the differences between these two settings. In part, this is because vanishing cycles arguments in characteristic zero and characteristic $p$ are similar to each other. For other topological methods, one usually needs a separate argument to show a solution of the characteristic zero problem implies a solution of the characteristic $p$ problem. 

It turns out that, in this case, deducing characteristic $p$ from characteristic $0$ would likely require vanishing cycles arguments  - in fact many of the same ones that can be used to directly study the cohomology in characteristic $p$. So knowledge in characteristic zero might not be so helpful on this problem.

\item A suitable family of spaces is provided by varying the parameters $\beta_i$. One could also vary the $\alpha_i$, but this is not necessary.

\end{enumerate}

For $X_{(\alpha_i)_{i=1}^m , (\beta_i)_{i=1}^m}$, as the $\beta_i$ vary, it turns out that, by \cite[XIII Lemma 2.1.11]{sga7-ii}, the vanishing cycles complex is supported at only the points where the hyperplanes $f(\alpha_i)=\beta_i$  fail to have normal crossings. Concretely, this means that only the points where some $k$ hyperplanes intersect in a linear space of dimension greater than the expected $n-k$ have vanishing cycles.

In our case, this means the vanishing cycles are supported at finitely many points, and we are again in the happy situation where the cohomology of vanishing cycles is simply a sum of contributions from special points:

\begin{lemma} Fix a tuple $(\alpha_i)_{i=1}^m , (\beta_i)_{i=1}^m$. Given $k$ indices $i_1,\dots, i_k$, if the $k$ hyperplanes of the form $f(\alpha_{i_1})=\beta_{i_1},\dots , f(\alpha_{i_k}) = \beta_{i_k}$ intersect in a linear space of dimension greater than $n-k$, then in fact $k>n$ and these hyperplanes intersect in a single point (i.e. a linear space of dimension $0$).   \end{lemma}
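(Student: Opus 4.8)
The plan is to reduce the statement to a single rank computation governed by the Vandermonde structure of evaluation functionals. Put linear coordinates $h_0,\dots,h_{n-1}$ on the space $\mathbb C^n$ of polynomials $h(T)=\sum_{j=0}^{n-1}h_jT^j$ of degree $<n$. The hyperplane $\{h(\alpha_i)=\beta_i\}$ is then cut out by the affine-linear equation $\sum_{j=0}^{n-1}\alpha_i^j h_j=\beta_i$, whose linear part is the evaluation functional $\ell_i\colon h\mapsto h(\alpha_i)$. Consequently, when the common intersection of the $k$ chosen hyperplanes is a nonempty linear space — which is what the hypothesis asserts — it is a translate of $\bigcap_{t=1}^{k}\ker\ell_{i_t}$, and hence has dimension $n-\rho$, where $\rho$ is the rank of the $k\times n$ matrix $\bigl(\alpha_{i_t}^{\,j}\bigr)_{1\le t\le k,\ 0\le j\le n-1}$.

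The one substantive input is that \emph{distinct} evaluation points give linearly independent functionals as long as there are at most $n$ of them: if $\alpha_{i_1},\dots,\alpha_{i_r}$ are distinct and $r\le n$, then $\ell_{i_1},\dots,\ell_{i_r}$ are linearly independent on $\mathbb C^n$. This is Lagrange interpolation — a relation $\sum_t c_t\,h(\alpha_{i_t})=0$ valid for all $h$ of degree $<n$ may be tested against $h(T)=\prod_{t'\ne t}(T-\alpha_{i_{t'}})$, a polynomial of degree $r-1<n$ that is nonzero at $\alpha_{i_t}$ and vanishes at the remaining $\alpha_{i_{t'}}$, which forces $c_t=0$.

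This pins $\rho$ down exactly. If $k\le n$, all $k$ rows of the matrix are independent, so $\rho=k$ and the intersection has the expected dimension $n-k$. If $k>n$, any $n$ of the rows are already independent while there are only $n$ columns, so $\rho=n$ and the intersection has dimension $0$. In either case $\rho=\min(k,n)$ and the intersection has dimension $\max(n-k,0)$. Hence a dimension strictly larger than the expected $n-k$ can occur only when $\max(n-k,0)>n-k$, i.e. only when $n-k<0$, and then the dimension is $\max(n-k,0)=0$; this is exactly the assertion. (If one prefers to carry out the computation inside the compactification $\mathbb P^n$ rather than in $\mathbb C^n$, one only observes in addition that adjoining the column of $-\beta_{i_t}$'s changes the rank by at most one, which does not affect the conclusion.)

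I do not expect a genuine obstacle here: beyond the bookkeeping the only ingredient is the classical linear independence of evaluation functionals, and the point worth emphasizing is the rigidity it produces — once $k\le n$ the $k$ evaluation conditions are automatically transverse, and once $k>n$ the rank is already maximal, so the hyperplanes $\{f(\alpha_i)=\beta_i\}$ admit no intermediate-dimensional failure of normal crossings, only the extreme one in which $n+1$ or more of them pass through a common point.
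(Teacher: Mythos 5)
Your proof is correct and essentially the same as the paper's: both compute the exact dimension of the intersection, when nonempty, to be $\max(n-k,0)$, from which the conclusion follows. The paper phrases it by directly exhibiting a basis of the kernel of the evaluation map (namely $T^r\prod_{j=1}^k(T-\alpha_{i_j})$ for $0\le r\le n-k-1$), while you compute the rank of the Vandermonde matrix via linear independence of evaluation functionals; these are the kernel and row space of the same linear map, so the two arguments are dual formulations of the same calculation.
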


Since there are finitely many sets of $k$ indices from $1$ to $m$, the total number of points that can appear this way is finite. 

\begin{proof} Consider the solutions of the equations $h(\alpha_i)=\beta_i$ for $k$ values $i_1,\dots i_k$ of $i$. If polynomials $h_1$ and $h_2$ both satisfy these equations, then $h_1 (\alpha_i) - h_2(\alpha_i)=0$ for all such $i$, then $h_1-h_2$ is divisible by the polynomial $\prod_{j=1}^k (T- \alpha_{i_j})$. The space of multiples of $\prod_{j=1}^k (T- \alpha_{i_j})$ with degree $<n$  has dimension exactly $\max(n-k,0)$, being generated by \[ \prod_{j=1}^k (T- \alpha_{i_j}),\hspace{5pt} T \prod_{j=1}^k (T- \alpha_{i_j}),\hspace{5pt} T^2 \prod_{j=1}^k (T- \alpha_{i_j}),\hspace{5pt} \dots  \hspace{5pt}T^{n-k-1} \prod_{j=1}^k (T- \alpha_{i_j}).\] Thus, the dimension of the space of solutions to these $k$ equations is $\max(n-k,0)$. Hence, the dimension of the intersection of these $k$ hyperplanes is only greater than $ n-k$ if $k>n$, in which case the dimension is zero and the intersection is a point.\end{proof} 

To calculate the vanishing cycles contribution from a single polynomial $h$, we use our typical strategy of passing to a local model where the geometry is simpler. Our local model will be the complement in $\mathbb P^n (\mathbb C)$ of all the hyperplanes that contain the point $h$. In other words, the hyperplanes which stay away from our given point can be ignored. In this setting, the cohomology of the special fiber, and vanishing cycles, can be computed explicitly. They exist only in the middle degree, from which it follows that \[ H^i(X_{(\alpha_i)_{i=1}^m , (\beta_i)_{i=1}^m}, V) =0 \] unless $i= n-1$ or $n$. The computations can also used to show that the Betti numbers in those degrees are at most $\binom{m-1}{n} $. This gives a very strong control on the corresponding sum. In fact, in \cite{SS}, myself and Shusterman show that
\[  \Bigl|  \sum_{ \substack{ h\in \mathbb F_q[T] \\ \deg h < n \\ \gcd(f+h, g) = 1 }} \chi(f+h)  \Bigr|  \leq (\sqrt{q}+1) \binom{m-1}{n} q^{n/2} .\] This minimum size of $n$ in terms of $m$ where this bound beats the trivial bound $q^n$ gets larger as $q$ grows larger. For small $q$, it is weaker than the Burgess bound and only helpful for $n>m/4$, whereas for large $q$ it can be much stronger.

Furthermore in \cite{SS}, we used a relationship between the M\"{o}bius function in number theory and characters, to deduce further arithmetic results about the M\"{o}bius function and primes. This relationship is only valid for polynomials over fields of small characteristic $p$, and does not hold for integers or polynomials over the complex numbers. It relies essentially on the fact that, for a polynomial $f$ in $T$, \[\frac{d}{dT} f^p = p  f^{p-1} \frac{df}{dT}  = 0 ,\] which does not have an analogue over the complex numbers or in the integers. This leaves the topological analogues of these questions about the M\"{o}bius function completely open.

\subsection{Future work}

It would be interesting to apply these methods to more general hyperplane complements that arise in number theory, in particular those defined by $h(\alpha_i) \neq \beta_{i,j}$ with multiple forbidden values $\beta_{i,j}$ for each $\alpha$. These would be connected to sums like \[ \sum_{n=a}^{a+N-1}  \chi_1(n) \chi_2(n+c) \] for a constant $c$. 

It would also be interesting to see if the results about the M\"{o}bius function, proven in small characteristic by reduction to Dirichlet characters, can be obtained in larger characteristics by some more subtle geometric method.

 \section{Equidistribution of CM points}\label{s-st}

Let $C$ be a hyperelliptic curve of genus $g$ over a finite field or the complex numbers. In other words, $C$ is the double cover of $\mathbb P^1$ branched at $2g+2$ fixed points of $\mathbb P^1$, or more generally a degree $2g+2$ subscheme of $\mathbb P^1$. Let $J$ be the Jacobian of $C$, a $g$-dimensional abelian variety. Then $J$ carries the structure of a commutative group. Furthermore, there is an Abel-Jacobi map $C \to J$. (To define this map, we need to fix a degree $1$ divisor class on $C$, which is always possible over complex numbers or a finite field, but may not be over other fields.)  By adding together $a$ copies of the Abel-Jacobi map using the group structure, we obtain a map $C^a \to J$. Let $\Theta_a$ be its image inside $J$. For a point $x \in A$, Shende and Tsimerman \cite{ST} studied the intersection \begin{equation}\label{theta-counting}\Theta_a \cap [x + \Theta_b],\end{equation} where $[x+ \Theta_b]$ refers to translating $\Theta_b$ by $x$ using this group structure.  Specifically, they studied its cohomology over the complex numbers and its number of points over finite fields.

The relationship of these two problems to number theory is somewhat subtle. Given an imaginary quadratic field $K = \mathbb Q( \sqrt{-D})$, the ring of integers \[\mathcal O_K =\begin{cases} \mathbb Z[\sqrt{-D} ] & D\not\equiv 3 \mod 4 \\ \mathbb Z\left[ \frac{ 1+ \sqrt{-D} }{2} \right] & D\equiv 3\mod 4 \end{cases}\] embeds into the complex numbers $\mathbb C$, forming an elliptic curve $\mathbb C/ \mathcal O_K$. This defines a point in the space $X(1) = \mathbb H/ SL_2(\mathbb Z)$ parameterizing elliptic curves. Moreover, for any ideal $I$ in the ring of integers $\mathcal O_K$, $\mathbb C/I$ is another elliptic curve. Two ideals give isomorphic elliptic curves if and only if they are equal up to multiplication by an element of $K$, and the equivalence classes of ideals up to this operation from a group, the ideal class group $Cl(K)$.  These elliptic curves are known as elliptic curves with \emph{complex multiplication}, and the associated points of $X(1)$ are \emph{CM points}. Duke's theorem \cite{Duke} tells us that, for $U$ a reasonable open subset of $X(1)$
\[ \lim_{D \to \infty}  \frac{ \left| \left\{ I \in Cl(K) \mid (\mathbb C/I) \in U \right \} \right| }{   |Cl(K)|} = \mu(U) \] where $\mu$ is the $SL_2(\mathbb R)$-invariant measure on $\mathbb H$ that assigns $X(1)$ total mass one, and an open set is reasonable if its boundary has measure zero. In other words, this theorem shows that the elliptic curves $\mathbb C/I$ are uniformly distributed inside $X(1)$, according to the measure $\mu$, in the limit as $D \to \infty$.

A question of Michel and Venkatesh asks, for a sequence of discriminants $D$ and ideals $I_D$, and two reasonable open sets $U_1, U_2$, if
\[ \lim_{D \to \infty}  \frac{ \left| \left\{ I \in Cl(K) \mid (\mathbb C/I) \in U_1,  (\mathcal C/ I_DI ) \in U_2\right\} \right| }{   |Cl(K)|} = \mu(U_1) \mu(U_2) \] as long as the minimum norm of an ideal representing $I_D$ goes to $\infty$ \cite[Conjecture 2 on p. 7]{MV}. In other words we ask whether, not only is $(\mathbb C/I)$ distributed uniformly, and therefore $(\mathbb C/I_DI)$ is distributed uniformly, but in addition these two distributions are independent.

It turns out that the question studied in \cite{ST} is a function field analogue of the Michel-Venkatesh equidistribution question. After \cite{ST} was written, a modified form of the equidistribution question was answered by Khayutin \cite{Khayutin}, along with a general version involving more than two sets $U_1,U_2$. The modification was that the limit must be restricted to discriminants $D$ such that $-D \mod p_i$ is a nonzero square in $\mathbb Z/p_i$ for two fixed small primes $p_1,p_2$. This uses techniques from ergodic theory that are completely different from those of \cite{ST}.

\subsection{The analogy between topology and arithmetic}

This explanation will be the sketchiest, as the techniques used to the number theoretic and topological pictures here are the furthest afield from what is discussed in the rest of this article. Thus, it may be better to view this as an explanation of an analogy rather than a rigorous explanation of a correspondence. This can all be given a very rigorous reasoning, in terms of the adelic point of view, as is discussed in \cite{ST}.

To make the analogy work, it is helpful to think of the elliptic curves parameterized by $X(1)$ not as elliptic curves, but merely as lattices of rank two. In other words, these are rank two free $\mathbb Z$-modules with a positive definite symmetric bilinear form, defined up to scaling.  The analogue of a lattice in the function field setting is an (algebraic/holomorphic) vector bundle on $\mathbb P^1$. This works essentially because the sections of a vector bundle are a free module for the ring of functions on a curve. More precisely, a vector bundle on the projective line $\mathbb P^1$, restricted to any open subset, such as $\mathbb A^1$, produces a free module for the ring of functions on that subset. The extra data needed to extend the vector bundle to the missing point $\mathbb P^1 - \mathbb A^1$ can be viewed as a norm extending the local norm on the field of formal Laurent series at that point.  This local norm is exactly analogous to the symmetric bilinear form extending the local norm on the ``missing point" $\mathbb R$. 

In this analogy, the field of functions on the hyperelliptic curve $C$ corresponds to the imaginary quadratic field $K$. The class group $Cl(K)$ corresponds to the Jacobian of $C$.  This is a close correspondence - we can view the Jacobian as parameterizing holomorphic line bundles on $C$, and each ideal of the ring of functions on $C$ defines a holomorphic line bundle, with these line bundles isomorphic if and only if the ideals are equal up to multiplication by a meromorphic function.

To an ideal of $\mathcal O_K$, we may associate a lattice, by forgetting the action of $\mathcal O_K$ on it and remembering only the $\mathbb Z$-module structure as well as the symmetric bilinear form $(a,b) \mapsto \operatorname{Re}(a \overline{b})$ defined by the embedding into $\mathbb C$. A similar process, where we forget the action of a larger ring and remember only a smaller ring, occurs in algebraic geometry when we push forward a vector bundle along a finite morphism. In particular, for a line bundle $L$ on $C$, and $\pi: C\to \mathbb P^1$, the pushforward $\pi_* L$ defines an (algebraic/holomorphic) vector bundle of rank two on $\mathbb P^1$, whose fiber over a point at which $\pi$ is not branched is the sum of the fiber of $L$ over the two preimages of that point. For simplicity we assume that $L$ has degree $0$, in which case $\pi_* L$ has degree $-g-1$.

Thus, the analogue of $\mathbb C/I$ in the algebraic geometry setting is the map that takes a line bundle $L$ of degree $0$ to the vector bundle $\pi_* L$, and the analogue of the question of Michel and Venkatesh involves, for sets $U_1$ and $U_2$ of isomorphism classes of vector bundles on $\mathbb P^1$ (of rank $2$ and degree $-1-g$), the set \begin{equation}\label{vector-bundle-counting} \left\{ L \in J  \mid \pi_* L \in U_1, \pi_* (L \otimes L') \in U_2 \right\}. \end{equation} (Here, products of ideals correspond to tensor products of line bundles, which matches up with addition in the Jacobian.) 

How does this end up in the formulation we gave originally, involving $\Theta_a$ and $\Theta_b$, with not a single vector bundle in sight? This happens because there is a straightforward classification of vector bundles on $\mathbb P^1$, as sums of tensor powers of a fixed line bundle $\mathcal O(1)$. In particular, among vector bundles of fixed degree, the isomorphism class of a vector bundle $V$ is determined by the least $n$ such that $V \otimes \mathcal O(1)^{\otimes n}$ has a holomorphic global section. By the projection formula, $\pi_* L \otimes  \mathcal O(1)^{\otimes n}$ has a global section if and only if $L \otimes \pi^*  \mathcal O(1)^{\otimes n}$ has a global section. Such a global section would vanish at \[  \deg ( L \otimes \pi^*  \mathcal O(1)^{\otimes n}) = 2n\] points of $C$, and the class of $L$ could then be written as the sum of the images of those $2n$ points on the Abel-Jacobi map. So knowing whether $L$ lies inside $\Theta_a$ for all $a$, and in particular for $a=2n$, tells us the isomorphism class of $\pi_* L$. Using this, we can write the number of points in \eqref{vector-bundle-counting}, which we are interested in, in terms of the number of points in \eqref{theta-counting} that we discussed earlier.

Thus, the cohomology of \eqref{theta-counting} is the topological analogue of the equidistribution problem.

Here we only require \eqref{theta-counting} for even $a$ and $b$, but to handle line bundles $L$ of odd degree we would need to know odd $a$ and $b$ as well, and since the odd case is not any more difficult, we express \eqref{theta-counting} without the parity condition.

\subsection{Topological methods}

The spaces $\Theta_a \cap [x + \Theta_b]$ certainly match Desiderata \ref{intro-vc-desiderata}:

\begin{enumerate}

\item The varieties $\Theta_a$ already have singularities, although they are of a very special form. (The space $\operatorname{Sym}^a(C)$ is smooth and maps surjectively to $\Theta_a$, and one can control how bad the singularities are by studying this map. In particular, it is known that they are \emph{rational homology manifolds}.) Intersecting two of them at best preserves these singularities, and at worst introduces new ones, so the spaces $\Theta_a \cap [x + \Theta_b]$ are indeed very far from manifolds. 

\item The spaces $\Theta_a$ do not have nice descriptions in terms of equations as such. Even the Jacobian variety $J$ does not really have a reasonable description in terms of equations. However, $J$ certainly does have a nice algebraic-geometry structure, being an abelian variety. It turns out the spaces $\Theta_a$ have a nice description as intersections of translates of the ``theta divisor" $\Theta_{g-1}$, which carries over to $\Theta_a \cap [x + \Theta_b]$, and this can play the role of a description by equations. In particular, this fact was used in \cite[Corollary 3.3]{ST} to invoke the Lefschetz hyperplane theorem to control the low-degree cohomology of the spaces $\Theta_a \cap [x + \Theta_b]$,.

\item While $\Theta_a$ has a reasonably nice description as a configuration space of unordered $a$-tuples of points on $C$, which may collide, where two points in the same fiber of the map to $\mathbb P^1$ can annihilate each other, the intersections $\Theta_a \cap [x + \Theta_b]$ do not seem to have any such nice description.

\item The spaces $\Theta_a \cap [x + \Theta_b]$ lie in a nice family, parameterized by $x \in J$. For generic values of $x$, the singularities are no worse than in $\Theta_a$ and $\Theta_b$ separately.

\end{enumerate}

In \cite{ST}, Shende and Tsimerman use the Lefschetz hyperplane theorem to evaluate the low-degree cohomology of $\Theta_a \cap [x + \Theta_b]$. They then wish to use Poincar\'{e} duality to turn this into a description of the high-degree cohomology. For generic values of $x$, $\Theta_a \cap [x + \Theta_b]$ is a rational homology manifold and so Poincar\'{e} duality holds in every degree. Moreover, it turns out that for $x$ outside a special locus in $J$ that can be handled separately,   $\Theta_a \cap [x + \Theta_b]$  is a rational homology manifold outside a low-dimensional subset, which means that Poincar\'{e} duality is valid outside a few middle degrees. Thus, the problem of controlling the cohomology of $\Theta_a \cap [x + \Theta_b]$ is reduced to bounding the Betti numbers in these middle degrees.

The tool used is vanishing cycles theory in its guise as the \emph{characteristic cycle}. We can view the spaces $\Theta_a \cap [x + \Theta_b]$ as the fibers of a map $\Theta_a \times \Theta_b \to J$. Characteristic cycle theory starts when we compose this with a map $f: J \to \mathbb C$, which may be defined locally on an open set. We can study the vanishing cycles of the induced map $\Theta_a \times \Theta_b \to \mathbb C$. It turns out that, to a large extent, these vanishing cycles depend only on the derivative $df$ of $f$. The derivative is a $1$-form, which we can think of as a section of the cotangent bundle $T^* J$ of $J$. It turns out that there is a closed subset $\operatorname{CC} \subset T^*J $ such that local contributions to vanishing cycles only occur at the points where $df$ intersects $\operatorname{CC}$. Moreover, if these intersections are isolated, then the dimension of the vanishing cycles contribution is proportional to the intersection multiplicity of $df$ with $\operatorname{CC}$.

Now we are not in fact interested in any map $f: J \to \mathbb C$ in particular. Instead, we are interested in the fiber over a single point of $J$. However, Massey found a convenient way to derive from the characteristic cycle information on the cohomology of the fiber at one point \cite{Massey}. In particular, he gave bounds for the Betti numbers in each degree in terms of $\operatorname{CC}$. Shende and Tsimerman were able to compute the characteristic cycle by considering the convenient special case when $df$ is a $J$-invariant $1$-form on $J$, where everything can ultimately be reduced to calculations on $C$, and then used Massey's formula to produce suitable Betti number bounds.

The theory of the characteristic cycle, as well as Massey's bound, was purely characteristic $0$, and so Shende and Tsimerman were not able to solve the finite field point counting question, only its complex number analogue. However, Saito defined a good notion of the characteristic cycle in characteristic $p$ \cite{Saito}, and following this, \cite{me-massey} was able to prove a good analogue of the main result of \cite{Massey}, deducing an estimate for the number of points on  $\Theta_a \cap [x + \Theta_b]$ over finite fields.

\subsection{Future work}

An analogous question on the intersection \[\Theta_a \cap [x_1 + \Theta_{b_1} ] \cap [x_2 + \Theta_{b_2} ] + \dots\] would already be of interest, being analogous to the number-theoretic equidistribution problem on $X(1)^n$ for $n>2$. 

So would similar equidistribution problems involving maps $\pi: C \to \mathbb P^1$ of degree greater than $2$. One could either study the theta divisors, or more generally the locus where $\pi_* \mathcal O_C$ is isomorphic to a chosen vector bundle on $C$.

Finally, it would be interesting to study the problem on a base curve other than $\mathbb P^1$.

\end{document}